\def\ni{\noindent}
\def\simp{Sp(d,\bR)}
\def\Wpp{W_{\psi}}
\def\Wop{W_{\mu_e(h)\psi}(x,\xi)}
\def\Fur{\mathcal{F}}
\def\la{\langle}
\def\ra{\rangle}
\def\phas{(x,\xi )}
\def\sch{\mathcal{S}}
\def\f{\psi}
\def\b{\beta}
\def\intrd{\int_{\rd}}
\def\cF{\mathcal{F}}              
\def\cS{\mathcal{S}}
\def\rd{\bR^d}
\def\rdd{{\bR^{2d}}}
\def\lrd{L^2(\rd)}
\def\bR{{\mathbb R}}
\def\Ren{\mathbb{R}^d}
\def\R{{\mathbb R}}
\def\C{{\mathbb C}}
\def\H{{\mathbb H}}
\newcommand\mc[1]{{\mathcal{#1}}}
\def\Spnr{Sp(d,\R)}
\def\Sptwor{Sp(2,\R)}
\def\Gltwonr{GL(2d,\R)}
\def\Aut{\mathop{\rm Aut}}
\def\t{\!\;^t\!}   
\def\cC{{\mathcal C}}
\def\cE{{\mathcal E}}
\def\cF{{\mathcal F}}
\def\cS{{\mathcal S}}
\def\cU{{\mathcal U}}
\def\squareforqed{\hbox{\rlap{$\sqcap$}$\sqcup$}}
\def\qed{\ifmmode\squareforqed\else{\unskip\nobreak\hfil
\penalty50\hskip1em\null\nobreak\hfil\squareforqed
\parfillskip=0pt\finalhyphendemerits=0\endgraf}\fi}
\newtheorem{theorem}{\sc Theorem}
\newtheorem{cor}[theorem]{\sc Corollary}
\newtheorem{lemma}[theorem]{\sc Lemma}
\newtheorem{prop}[theorem]{\sc Proposition}
\newtheorem{proposition}[theorem]{\sc Proposition}
\newtheorem{defn}[theorem]{\sc Definition}
\newtheorem{ex}[theorem]{\sc Example}
\begin{document}

\title{Triangular Subgroups of $\Spnr$ and  Reproducing Formulae}
\author{ E.~Cordero}
\address{Dipartimento di Matematica\\
Universit\`a di  Torino\\
Italy}
\email{elena.cordero@unito.it}
\author{A.~Tabacco}
\address{Dipartimento di Scienze Matematiche\\
Politecnico di  Torino\\Italy}
\email{anita.tabacco@polito.it}

\subjclass[2010]{42C15, 17B45} \keywords{Reproducing formulae, Metaplectic representation, Wigner distribution,
 Semidirect product}
\date{June 11, 2012}
\begin{abstract}
We consider the (extended) metaplectic representation of the
semidirect product $\mathcal{G}=\H^d\rtimes Sp(d,\R)$ between the Heisenberg
group  and the symplectic group. Subgroups $H=\Sigma \rtimes D$, with $\Sigma$ being a $d\times d$ symmetric matrix and $D$ a closed
subgroup of $GL(d,\R)$, are our main concern. We shall give a general setting
for the reproducibility of such groups which include and assemble the ones for the single examples treated in  \cite{AEFK1}. As a byproduct,  the extended metaplectic representation restricted to some classes of such subgroups  is either the Schr\" odinger representation of $\rdd$ or the wavelet representation of
$\rd\rtimes D$, with $D$ closed subgroup of $GL(d,\R)$. Finally, we shall provide new examples of reproducing groups
of the type $H=\Sigma\rtimes D$, in dimension $d=2$.
\end{abstract}

\keywords{Reproducing formulae, Metaplectic representation, Wigner distribution, Semidirect product.} \maketitle



\pagestyle{myheadings} \markboth{\sc the equivalence}{\sc
e.cordero et al.}



\section{Introduction}

Reproducing formulae  appear
almost everywhere in the literature, from  coherent states in
physics to group representations, Gabor analysis,  wavelet analysis and its many generalizations. This theory has a wealth of applications in engineering, physics and numerical analysis (see, e.g., \cite{ALI2000,BernierTaylor96,Candes2001,DIX,FHN08,book,WW01} and references therein).
It is remarkable to observe that most existing reproducing formulae for functions $f\in\lrd$ can be formulated in one and the same general form, that is, an integral formula of the type

\begin{equation}\label{rep}
f=\int_{H}\la f,\mu_e(h)\phi\ra\mu_e(h)\phi\;dh,\quad \mbox{for\, all}\, f\in\lrd,
\end{equation}
in the following sense. First of all, the domain of integration $H$ is a (connected, closed, Lie)
subgroup of the semidirect product $\mathcal{G}=\H^d\rtimes Sp(d,\R)$ between the Heisenberg
group $\H^d$ and the symplectic group $Sp(d,\R)$, $dh$ is a left Haar measure of $H$, and
$\mu_e$ is the extended metaplectic representation of  $\mathcal{G}$, to be defined below in detail.
In the current literature, the function $\phi\in\lrd$ is usually referred as wavelet or admissible vector.
As we shall show below, many standard reproducing formulae, such as those arising in Gabor analysis and wavelet theory,
are of the above type: for some groups $H$, but not for all, one finds  $\phi\in\lrd$  in such a way that the above formula holds weakly for every $f\in\lrd$. Thus, some groups $H$ give rise to interesting analysis and some do not, like, for example, the full factors themselves, i.e. $\H^d\subset \mathcal{G}$ or $Sp(d,\R)\subset \mathcal{G}$. The question whether
a given subgroup $H\subset \mathcal{G}$ leads to a reproducing formula or not is both relevant and difficult, and is the main theme
of our recent investigations \cite{AEFK1,AEFK2}, together with the explicit description of the admissible vectors.
The ongoing formulation of things may be simplified a bit because the reproducing formula ~\eqref{rep}
is equivalent to
\begin{equation}\label{rep2}
\|f\|_2^2=\int_{H}|\la f,\mu_e(h)\phi\ra|^2\;dh,\quad {\rm
for\,all}\, f\in\lrd,
\end{equation}
which is manifestly insensitive to phase multiplicative factors, that is, invariant under transformations of the type
$\phi\mapsto e^{i \alpha}\phi$. This allows a technical reduction, that is, one can factor out the center of the
Heisenberg group, whose action through $\mu_e$ is through phase factors, and one may safely pass from the whole group $\mathcal{G}$ to the somewhat simpler group $G=\rdd\rtimes Sp(d,\R)$. We formalize this discussion in the following:

\begin{defn}\label{RS} A connected Lie subgroup $H$ of
$G=\rdd\rtimes Sp(d,\R)$ is a {\it reproducing group} for $\mu_e$
if there exists a function $\phi\in\lrd$ such that
the identity \eqref{rep}
holds weakly for all $f\in\lrd$. Any such $\phi\in\lrd$ is called a reproducing function.
\end{defn}
Notice that we do require formula~\eqref{rep} to hold for all
functions in $\lrd$ for the same window $\phi$, but we do not
require the restriction of $\mu_e$ to $H$ to be irreducible.

\par
In this paper we consider a class of triangular subgroups of the symplectic group, that we collectively denote as the class $\cE$. From the structural point of view, a group $H\in\cE$ is a semidirect product of the form $H=\Sigma \rtimes D$,
where $\Sigma$ is a $d$-dimensional subspace of $d\times d$ symmetric matrices and $D$ is a closed subgroup of $GL(d,\R)$ acting on $\Sigma$; hence $\Sigma$ is abelian and normal in $H$. Evidently, when seen within $G$, each of these groups is contained in the symplectic factor.
We shall show that many examples used in the applications fall in this class. This motivates our study of the class $\cE$.\par Our main result is provided by Theorem \ref{RepThm} below, which contains necessary and sufficient conditions for a wavelet  $\psi$ to be reproducing on a group $H$ in $\cE$.  This result is a far-reaching extension  of the reproducing conditions for the special cases treated in \cite{AEFK2}. We underline that this result is based on a deep study of the properties of a quadratic  mapping $\Phi$ on $\rd$, developed in Section \ref{42} below.
Finally, we shall provide new examples of reproducing groups
in the class $\cE$ in dimension $d=2$.
\par
The paper also contains some additional remarks concerning an alternative formulation of the concept of \emph{admissible group}, a formally stronger notion than the notion given in Definition \ref{RS}. We also clarify in what sense our setup includes Gabor and wavelet analyses.

\section{Preliminaries and notation}\label{prelim}
The symplectic group is defined by
$$
\Spnr=\left\{g\in\Gltwonr:\;^t\!gJg=J\right\},
$$
where
$$
J=\bmatrix 0&I_d\\-I_d&0\endbmatrix
$$
is the standard  symplectic form
\begin{equation}
\omega(x,y)=\;^t\!xJy, \qquad x,y\in\R^{2d}.
\label{symp}\end{equation}

The metaplectic representation $\mu$ of (the two-sheeted cover of)
the symplectic group arises as intertwining operator between the
standard Schr\"odinger representation $\rho$ of the Heisenberg
group $\H^d$ and the representation that is obtained from it by
composing $\rho$ with the action of $\Spnr$ by automorphisms on
$\H^d$ (see, e.g., \cite{folland89}).  We briefly review its
construction.
\par
The Heisenberg group $\H^d$ is the group obtained by defining on
$\R^{2d+1}$  the product$$
(z,t)\cdot(z',t')=(z+z',t+t'+\frac{1}{2}\omega(z,z')),
$$
where $\omega$ stands for the standard symplectic form in
$\R^{2d}$ given in \eqref{symp}. We denote the   translation  and
modulation operators on $\lrd$ by
$$
T_xf(t)=f(t-x)\quad{\rm and}\quad M_{\xi}f(t)= e^{2\pi i\la \xi,
t\ra }f(t).$$ The Schr\"odinger representation of the group $\H^d$
on $\lrd$ is then defined by
$$
\rho(x,\xi,t)f(y)=e^{2\pi it}e^{-\pi i \langle x,\xi\rangle}
e^{2\pi i \langle\xi, y\rangle}f(y-x) = e^{2\pi it}e^{\pi i\langle
x, \xi\rangle} T_x M_\xi f(y),
$$
where we write $z=(x,\xi)$ when we separate space components (that
is $x$) from frequency components (that is $\xi$) in a point $z$
in phase space $\R^{2d}$. The symplectic group acts on $\H^d$ via
automorphisms that leave the center
$\{(0,t):t\in\R\}\in\H^d\simeq\R$ of $\H^d$ pointwise fixed:
$$
A\cdot\left(z,t\right) =\left(Az,t\right).
$$
Therefore, for any fixed $A\in Sp(d,\R)$ there is a representation
$$
\rho_A:\H^d\to\cU(L^2(\R^d)), \qquad \left(z,t\right)\mapsto
\rho\left(A\cdot(z,t)\right)
$$
whose restriction to the center is a multiple of the identity. By
the Stone-von Neumann  theorem, $\rho_A$ is equivalent to $\rho$.
That is, there exists an intertwining unitary operator
$\mu(A)\in\cU(L^2(\R^d))$ such that
$\rho_A(z,t)=\mu(A)\circ\rho(z,t)\circ\mu(A)^{-1}$, for all
$(z,t)\in \H^d$. By Schur's lemma, $\mu$ is determined up to a
phase factor $e^{is}, s\in\R$. It turns out that the phase
ambiguity is really a sign, so that $\mu$ lifts to a
representation of the (double cover of the) symplectic group. It
is the famous metaplectic or Shale-Weil representation. \par The
representations $\rho$ and $\mu$ can be combined and give rise to
the  extended metaplectic representation of the group
$G=\H^d\rtimes Sp(d,\R)$, the semidirect product of $\H^d$ and
$Sp(d,\R)$. The group law on $G$ is
\begin{equation}\label{Glaw}
\left((z,t),A\right)\cdot\left((z',t'),A'\right)
=\left((z,t)\cdot(Az',t'),AA'\right)
\end{equation}
and the extended metaplectic   representation $\mu_e$ of $G$ is
\begin{equation}\label{defex}
\mu_e\left((z,t),A\right)=\rho(z,t)\circ\mu(A).
\end{equation}
\par
Observe that the reproducing formula~\eqref{rep} is
insensitive to phase factors:  if we replace
$\mu_e(h)\phi$ with $e^{is}\mu_e(h)\phi$ the formula is
unchanged, for any $s\in\R$. The role of the center of the
Heisenberg group is thus irrelevant, so that the ``true'' group
under consideration is $\rdd\rtimes\Spnr$, which we denote again
by $G$. Thus $G$ acts naturally by affine transformations on phase
space, namely
\begin{equation}
g\cdot\phas=\left((q,p),A\right)\cdot\phas=A\t\phas+\t(q,p).
\label{affaction3}
\end{equation}

For elements $\Spnr$ in special form, the metaplectic
representation can be computed explicitly in a simple way. For
$f\in L^2(\R^d)$, we have
\begin{align}
\mu\left(\bmatrix A&0\\ 0&\;^t\!A^{-1}\endbmatrix\right)f(x)
&=(\det A)^{-1/2}f(A^{-1}x)\label{diag}\\
\mu\left(\bmatrix I&0\\ C&I\endbmatrix\right)f(x)
&=\pm e^{-i\pi\langle Cx,x\rangle}f(x)\label{lower}\\
\mu\left(J\right)&=i^{d/2}\mc{F}^{-1}\label{iot},
\end{align}
where $\mc{F}$ denotes the Fourier transform
$$
\mc{F} f(\xi)=\int_{\R^d}f(x)e^{-2\pi i\langle x,\xi\rangle}\;dx,
\qquad f\in L^1(\R^d)\cap L^2(\R^d).
$$
In the above formula and elsewhere,  $\la x,\xi\ra$ denotes the
inner product of $x,\xi\in\Ren$.  Similarly, for $f,g\in\lrd$,
$\la f,g\ra$ will denote their inner product  in $L^2(\Ren)$.
Other notation is as follows. We put $\dot\R=\R\setminus\{0\}$,
$\R_{\pm}=(0,\pm\infty)$. For $1\leq p\leq\infty$,  $\|\cdot\|_p$
stands for the $L^p$-norm of measurable functions on $\rd$ with
respect to Lebesgue measure. The left Haar measure of a group $H$
will be written $dh$ and we always assume that the Haar measure of
a compact group  is normalized so that the total mass is one.
Let $\Omega$ be an open set of $\rd$. Then $\cC_0^\infty(\Omega)$ is the space of smooth functions with compact support contained in $\Omega$.
\section{Gabor and Wavelet analyses}
We show below that both Gabor and wavelet analyses can be viewed as particular cases of the general setup that we are considering. This fact is somehow known and is perhaps part of common knowledge, but on the one hand we could not locate it in the literature in a precise fashion and, on the other hand, we want to present some additional remarks that are of some independent interest. We thus start with a side observation.

\subsection{Weak  admissibility}  The notion of reproducing group admits an equivalent version that is obtained by weakening a property that has been investigated in \cite{AEFK1} and that is formulated by means of the    Wigner
distribution. The cross-Wigner
distribution $W_{f,g}$ of $f,g\in L^2(\Ren)$ is given by
\begin{equation}
\label{eq3232} W_{f,g}(x,\xi)=\int  e^{-2\pi i\la\xi,y\ra}
f\left(x+\frac{y}2\right)\overline{g\left(x-\frac{y}2\right)}
\,dy.
\end{equation}
The quadratic expression $W_f := W_{f,f}$ is  called the {\it
Wigner distribution} of $f$. We collect below some of its well-known properties (see e.g. \cite{folland89}).
\begin{prop}\label{Wigflavor} The Wigner distribution of $f\in\lrd$
satisfies:
\begin{itemize}
\item[(i)]  $W_f$ is uniformly continuous on $\rdd$, and
$\|W_f\|_{\infty}\leq 2^d \|f\|_2^2$. \item[(ii)]
$W_{f}$ is
real-valued.
 \item[(iii)]  Moyal's identity:
${\displaystyle \la W_{f},W_{g}\ra_{L^2(\rdd)}=\la
f,g\ra_{L^2(\rd)} \overline{\la f,g\ra}_{L^2(\rd)}}$. \item[(iv)]
If $f\in\sch(\R^d)$, then $W_f\in \sch(\R^{2d})$.
\item[(v)]  Marginal properties:
\begin{equation}\label{mmm}
\intrd W_f(x,\xi)\,d\xi=|f(x)|^2,
\qquad
\intrd W_f(x,\xi)\,dx=|\hat{f}(\xi)|^2
\end{equation}
for $\hat{f}\in L^1(\R^d)$, $f\in L^1(\R^d)$, respectively.
\item[(vi)] If both $f,\hat{f}$ are in $L^1(\R^d)$ (hence in $L^2(\R^d)$)  then
\begin{equation}
\int_{\rdd}\,W_f(w)\,dw=\|f\|_{L^2}^2.
\label{eqnorm}
\end{equation}
\end{itemize}
\end{prop}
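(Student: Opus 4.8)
The plan is to verify the six assertions in turn, exploiting throughout the single structural remark that, for each fixed $x$, the map $\xi\mapsto W_f(x,\xi)$ is the Fourier transform in the $y$-variable of the auxiliary function $F_x(y)=f(x+y/2)\overline{f(x-y/2)}$; each property then reduces to a standard tool (Cauchy--Schwarz, Plancherel, Fourier inversion, or a change of variables). I would begin with the sup-norm bound in (i): by Cauchy--Schwarz the defining integral is dominated by $\intrd|f(x+y/2)|\,|f(x-y/2)|\,dy$, and the substitution $u=x+y/2$ rewrites this as $2^d\intrd|f(u)|\,|f(2x-u)|\,du\le 2^d\|f\|_2^2$, so that $\|W_f\|_\infty\le 2^d\|f\|_2^2$. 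The same argument gives the sesquilinear estimate $\|W_{g,h}\|_\infty\le 2^d\|g\|_2\|h\|_2$, which I would use to deduce uniform continuity: for $f\in\sch(\rd)$ one has $W_f\in\sch(\rdd)$ (this is (iv)), hence $W_f$ is uniformly continuous there, and for general $f\in\lrd$ I approximate by $f_n\in\sch(\rd)$ and observe, via $W_{f_n}-W_f=W_{f_n-f,f_n}+W_{f,f_n-f}$ and the bilinear bound, that $W_{f_n}\to W_f$ uniformly; a uniform limit of uniformly continuous functions is uniformly continuous.

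Property (ii) is immediate: conjugating the defining integral and substituting $y\mapsto-y$ returns $W_f(x,\xi)$, so $W_f$ is real-valued. For (iv) I note that $(x,y)\mapsto F_x(y)$ lies in $\sch(\rdd)$ whenever $f\in\sch(\rd)$, being a product of two Schwartz functions precomposed with an invertible linear map, and that the partial Fourier transform in the $y$-variable preserves $\sch(\rdd)$; hence $W_f\in\sch(\rdd)$.

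The quantitative core is Moyal's identity (iii). Fixing $x$ and applying Plancherel in $\xi$ collapses the pairing of the two partial Fourier transforms into $\intrd F_x(y)\overline{G_x(y)}\,dy$, where $G_x$ is the analogue of $F_x$ built from $g$; the symmetric change of variables $u=x+y/2$, $v=x-y/2$ (unit Jacobian) then factorizes the resulting double integral as $\bigl(\intrd f\overline{g}\bigr)\bigl(\intrd\overline{f}g\bigr)=\la f,g\ra\,\overline{\la f,g\ra}$. The marginals (v) belong to the same circle of ideas: integrating $W_f(x,\xi)$ in $\xi$ and invoking Fourier inversion evaluates $F_x$ at $y=0$, which is exactly $|f(x)|^2$; the second marginal I obtain by inserting the Fourier representation of $f$ into the defining integral and integrating out $x$, which forces both frequency variables to equal $\xi$ and produces $|\hat f(\xi)|^2$. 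Finally (vi) is a corollary of (v): integrating the first marginal over $x$ gives $\|f\|_2^2$, the use of Fubini being legitimized by the hypothesis $f,\hat f\in L^1(\rd)$.

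The main obstacle is not any individual computation but the rigorous justification of the interchanges of integration in (iii), (v) and (vi) under the stated minimal hypotheses: the Fourier-inversion and Fubini steps require enough integrability to be valid pointwise and absolutely, while the Plancherel step in (iii) is cleanest on a dense subclass. I would therefore carry out (iii)--(vi) first on $\sch(\rd)$, where every manipulation is unconditionally valid, and then pass to $\lrd$ (respectively to the stated $L^1$ hypotheses) by density together with the boundedness estimate of (i).
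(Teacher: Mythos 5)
The paper offers no proof of this proposition: it is presented as a list of well-known facts with a pointer to Folland's book, so there is nothing internal to compare your argument against. Your proof is the standard one, and items (i)--(iv) are in order: the Cauchy--Schwarz bound with the substitution $u=x+y/2$, the decomposition $W_{f_n}-W_f=W_{f_n-f,f_n}+W_{f,f_n-f}$ for uniform continuity, the reflection $y\mapsto-y$ for (ii), and the Plancherel/change-of-variables computation for Moyal's identity are all correct. For (iii) on all of $\lrd$ you do not even need a density step: the function $y\mapsto f(x+y/2)\overline{f(x-y/2)}$ lies in $L^1\cap L^2(\rd)$ for a.e.\ $x$, so Plancherel in $\xi$ followed by Fubini in $x$ applies directly, and this avoids the slight circularity of using boundedness of $W$ on $L^2$ (itself a consequence of Moyal) to pass to the limit.

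Two points in (v)--(vi) need repair. The fallback of proving (v) on $\sch(\rd)$ and then ``passing to the stated $L^1$ hypotheses by density'' is not a valid argument: the marginals are pointwise a.e.\ identities and do not survive $L^2$-limits, so they must be verified directly under the stated hypotheses --- which your sketch essentially does, provided you add that $\hat f\in L^1(\rd)$ forces $W_f(x,\cdot)\in L^1(\rd)$ uniformly in $x$ and gives $f$ a continuous representative, so that Fourier inversion at $y=0$ is legitimate, while the second marginal is a direct Fubini computation using $(u,v)\mapsto f(u)\overline{f(v)}\in L^1(\rdd)$. More seriously, the claim that Fubini in (vi) is ``legitimized by $f,\hat f\in L^1$'' is wrong: these hypotheses do not imply $W_f\in L^1(\rdd)$ (absolute integrability of $W_f$ on phase space characterizes the much smaller Feichtinger algebra, and the paper's own box-function example already shows that integrability of $f$ does not control integrability of $W_f$). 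The integral in \eqref{eqnorm} must therefore be read as the iterated integral $\intrd\left(\intrd W_f(x,\xi)\,d\xi\right)dx$, which is exactly what your derivation of (vi) from the first marginal produces; the conclusion stands, but not via Fubini on $\rdd$.
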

In \cite{AEFK1} it is introduced the notion of \emph{admissible} group, one for which there exists $\phi\in\lrd$ such that \eqref{admiss} below holds. Together with
some additional integrability and boundedness properties of
$h\mapsto \Wpp(h^{-1}\cdot\phas)$, it implies  that a subgroup $H$ of
$G=\rdd\rtimes\simp$ is reproducing. For the reader's convenience, we recall the statement of  \cite[Thm.1]{AEFK1}, where the main  point is made.
\begin{theorem}\label{main}
Suppose that $\phi\in\lrd$ is such that the mapping
\begin{equation}\label{suff1}
h\mapsto \Wop=\Wpp(h^{-1}\cdot\phas)\end{equation} is in
$L^{1}(H)$ for a.e. $\phas\in\rdd$ and
\begin{equation}\label{suff2} \int_H |\Wpp(h^{-1}\cdot\phas)|\,dh\leq
M, \qquad \text{for a.e. }\phas\in\rdd.
\end{equation}
Then condition \eqref{rep} holds for all $f\in\lrd$ if and only if
the following admissibility condition is satisfied:
\begin{equation}\label{admiss}
\int_H\Wpp(h^{-1}\cdot\phas)\,dh=1,\quad{\mbox
for\,a.e.}\,\phas\in\rdd.
\end{equation}
\end{theorem}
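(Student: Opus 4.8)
The plan is to translate the reproducing identity into a statement about the Wigner distribution, where the hypotheses \eqref{suff1}--\eqref{suff2} are tailored, and then to exploit Moyal's identity together with a density argument. First, I would use the phase-insensitivity already noted in the text to replace \eqref{rep} by its polarized form: \eqref{rep} holds weakly for all $f\in\lrd$ if and only if
\[
\la f,g\ra=\int_H\la f,\mu_e(h)\phi\ra\,\overline{\la g,\mu_e(h)\phi\ra}\,dh\qquad\text{for all }f,g\in\lrd,
\]
the diagonal case $f=g$ being exactly \eqref{rep2}. The bridge to the Wigner distribution is the cross form of Moyal's identity (Proposition \ref{Wigflavor}(iii)), namely $\la f,\mu_e(h)\phi\ra\,\overline{\la g,\mu_e(h)\phi\ra}=\la W_{f,g},W_{\mu_e(h)\phi}\ra_{L^2(\rdd)}$, combined with the metaplectic covariance $W_{\mu_e(h)\phi}(x,\xi)=W_\phi(h^{-1}\cdot\phas)$ that is already built into the notation of \eqref{suff1}.

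Next I would integrate in $h$ over $H$ and interchange the order of integration. This is precisely where the hypotheses enter: taking $f,g$ in the Schwartz class, property (iv) gives $W_{f,g}\in\cS(\rdd)\subset L^1\cap L^\infty$, and together with the $L^1(H)$-integrability \eqref{suff1} and the uniform bound $M$ in \eqref{suff2} this justifies Fubini's theorem. Setting
\[
F\phas:=\int_H W_\phi(h^{-1}\cdot\phas)\,dh,
\]
which lies in $L^\infty(\rdd)$ by \eqref{suff2}, the interchange yields $\int_H\la f,\mu_e(h)\phi\ra\,\overline{\la g,\mu_e(h)\phi\ra}\,dh=\la W_{f,g},F\ra$. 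On the other hand, the polarized version of the normalization property (vi), i.e. $\la f,g\ra=\int_{\rdd}W_{f,g}(w)\,dw=\la W_{f,g},1\ra$, is available for $f,g\in\cS(\rd)$. Hence the reproducing formula is equivalent to
\[
\la W_{f,g},\,F-1\ra=0\qquad\text{for all }f,g\in\cS(\rd).
\]

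The two implications now follow from this pairing. If the admissibility condition \eqref{admiss} holds, then $F\equiv1$, the pairing vanishes identically, and \eqref{rep} holds on the dense class $\cS(\rd)$ and then on all of $\lrd$ by the uniform bound \eqref{suff2}. For the converse I would invoke the density of the cross-Wigner distributions: the linear span of $\{W_{f,g}:f,g\in\cS(\rd)\}$ is dense in $\cS(\rdd)$, a reformulation of Moyal's identity via the essentially unitary correspondence between $W_{f,g}$ and $f\otimes\bar g$ and the density of such elementary tensors in $L^2(\rd\times\rd)\cong L^2(\rdd)$. Since $F-1\in L^\infty(\rdd)$ is a tempered distribution annihilating this dense family, it must vanish, giving $F=1$ a.e., i.e. \eqref{admiss}. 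The main obstacle is exactly this converse step: ensuring that the family $\{W_{f,g}\}$ is rich enough to force $F=1$ pointwise almost everywhere rather than merely in an averaged sense, and carefully handling the Fubini interchange so that the argument delivers the pointwise identity \eqref{admiss} and not a weaker integrated conclusion.
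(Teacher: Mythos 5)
The paper does not actually prove Theorem \ref{main}: it is recalled verbatim from \cite[Thm. 1]{AEFK1}, so there is no in-paper proof to compare against. Your argument --- polarization, the cross form of Moyal's identity combined with the covariance $W_{\mu_e(h)\phi}=W_\phi(h^{-1}\cdot\,)$, a Fubini interchange justified precisely by \eqref{suff1}--\eqref{suff2}, and the density of ${\rm span}\{W_{f,g}:f,g\in\cS(\rd)\}$ for the converse --- is the standard proof of that result and is essentially correct. The one step to tighten is the last one: to pass from the vanishing of all pairings $\la W_{f,g},F-1\ra$, where $F\phas=\int_H W_\phi(h^{-1}\cdot\phas)\,dh$ lies only in $L^\infty(\rdd)$, to the pointwise conclusion \eqref{admiss}, you need the span of the cross-Wigner distributions to be dense in $\cS(\rdd)$ (or at least in $L^1(\rdd)$), not merely in $L^2(\rdd)$ as your tensor-product justification delivers, since an $L^\infty$ function is not a continuous functional on $L^2$; the stronger density is exactly what the paper invokes after \eqref{pol}, and it does hold because the Wigner transform is a topological isomorphism of $\cS(\rd\times\rd)$ onto $\cS(\rdd)$ carrying $f\otimes\bar g$ to $W_{f,g}$.
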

Assumptions  \eqref{suff1} and \eqref{suff2} are sufficient but not necessary conditions for a subgroup to be  reproducing, as illustrated below (see the example in the next Section \ref{classeE}).

The notion of \emph{weak admissible} group is as follows.
Consider first the vector space
\begin{equation}\label{Vspan}V:={\rm span}\{W_f,\,\,f\in\cS(\rd)\}=\{ \sum_{k=1}^N c_k W_{f_k}, f_k\in\cS(\rd), c_k\in\C\}.\end{equation}
For $f,g\in\sch(\rd)$, a straightforward computation gives
\begin{equation}\label{pol} W_{f,g}=\frac12 \left[W_{f+g}+i W_{f+ig}-(1+i)(W_f+W_g))\right].
\end{equation}
Since  ${\rm
span}\,\{W_{f,g}\,\,f,g\in\sch(\rd)\}$ is dense in $\sch(\rdd)\,$ (see  \cite{AEFK1}), it follows from \eqref{pol} that also $V$ is dense in $\sch(\rdd)$.

Now, assume that the subgroup $H$ is reproducing and let $\phi\in\lrd$ be a reproducing function.
Define the conjugate-linear functional $\ell$ on $V$ by
\begin{equation}\label{funt}\ell(F)=\int_H \la W_{\mu_e(h)\phi}, F\ra \,dh,\quad \forall F\in V.\end{equation}
The functional $\ell$ is well-defined and continuous on $V$ with respect to the $L^1$ norm, as  shown presently.
Let  $F=\sum_{k=1}^N c_k W_{f_k}, f_k\in\cS(\rd),$ $c_k\in\C$, be an element of the space $V$. Using Moyal's identity (Proposition \ref{Wigflavor})  and the reproducing condition \eqref{rep2}, we have
\begin{align*}
|\ell(F)|&=|\int_H \la W_{\mu_e(h)\phi}, \sum_{k=1}^N c_k W_{f_k}\ra \,dh|=|\sum_{k=1}^N\bar{c}_k\int_H \la W_{\mu_e(h)\phi}, W_{f_k}\ra \,dh|\\
&=|\sum_{k=1}^N \bar{c}_k\int_{H}|\la f_k,\mu_e(h)\phi\ra|^2\,d h|=|\sum_{k=1}^N\bar{c}_k\|f_k\|^2_2|=|\sum_{k=1}^N\bar{c}_k \int_\rdd W_{f_k}\phas dx d\xi|\\
&=|\int_\rdd \sum_{k=1}^N\bar{c}_k W_{f_k} \phas dx d\xi |=|\la 1, \sum_{k=1}^N {c}_k W_{f_k}\ra| =|\int_\rdd \overline{F}\phas dx d\xi|\leq \|F\|_1.
\end{align*}
Moreover, from the previous computations it is clear that $\ell(F)= \la 1,F\ra$, for every $F\in V$.
Finally, since $V$ is dense in $L^1(\rdd)$, the functional $\ell$ can be  uniquely extended to a continuous functional $\tilde{\ell}$ on $L^1(\rdd)$, which   coincides with $1\in L^\infty(\rdd)$.

Observe that, in general, this does not imply that $\tilde{\ell}(F)=\int_H \la W_{\mu_e(h)\phi}, F\ra \,dh,\quad \forall F\in L^1(\rdd)$. Indeed, one should know that the mapping $F\to \la W_{\mu_e(h)\phi}, F\ra$ is in $L^1(H)$  in order for  the integral to make sense, and it should satisfy $\int_{H}\la W_{\mu_e(h)\phi}\, dh, F\ra=\la 1,F\ra,\,\,\forall F\in L^1(\rdd).$

Those observations yield to the following definition.
\begin{defn}
We say that the subgroup $H$ of $G$ is  weakly admissible if there exists a function $\phi\in\lrd$ such that the functional
\eqref{funt} is well-defined on the set $V$ defined in \eqref{Vspan} and verify
\begin{equation}\label{funt2} \ell(F)=\int_H \la W_{\mu_e(h)\phi}, F\ra \,dh=\la 1, F \ra,\quad \forall F\in V.\end{equation}
\end{defn}
We are now in a position to state and prove our first observation.

\begin{theorem}\label{eqrepadm} The following  are equivalent:\\
(i) $H$ is weakly admissible and \eqref{funt2} holds for $\phi\in\lrd$;
(ii) $H$ is reproducing and $\phi\in\lrd$ is a reproducing function.
\end{theorem}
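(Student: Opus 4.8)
The plan is to prove the equivalence by showing that each condition forces the other, exploiting the density of $V$ in $L^1(\rdd)$ and Moyal's identity as the bridge between the Wigner-distribution formulation \eqref{funt2} and the square-norm reproducing formula \eqref{rep2}. The direction $(ii)\Rightarrow(i)$ is essentially already carried out in the discussion preceding the statement: assuming $H$ is reproducing with reproducing function $\phi$, the computation displayed above shows that the functional $\ell$ of \eqref{funt} is well-defined and continuous on $V$ and satisfies $\ell(F)=\la 1,F\ra$ for every $F\in V$, which is precisely \eqref{funt2}. So for this implication I would simply invoke that computation, perhaps restating that the only inputs used were Moyal's identity from Proposition \ref{Wigflavor}(iii) and the reproducing identity \eqref{rep2}.

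The work therefore concentrates on $(i)\Rightarrow(ii)$. Here I would start from the assumption that \eqref{funt2} holds and aim to recover \eqref{rep2}, since by the remark after Definition \ref{RS} (equivalence of \eqref{rep} and \eqref{rep2}) this suffices to conclude that $H$ is reproducing with reproducing function $\phi$. First I would specialize \eqref{funt2} to the test elements $F=W_f$ with $f\in\cS(\rd)$, which lie in $V$ by construction. Applying Moyal's identity to rewrite $\la W_{\mu_e(h)\phi},W_f\ra=|\la f,\mu_e(h)\phi\ra|^2$ inside the integral, the left-hand side $\ell(W_f)$ becomes $\int_H|\la f,\mu_e(h)\phi\ra|^2\,dh$, while the right-hand side $\la 1,W_f\ra$ equals $\int_\rdd W_f\phas\,dx\,d\xi=\|f\|_2^2$ by the normalization property \eqref{eqnorm} of the Wigner distribution (valid since $f\in\cS(\rd)$ implies $f,\hat f\in L^1$). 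This yields \eqref{rep2} for all $f\in\cS(\rd)$.

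The remaining step is to upgrade \eqref{rep2} from the dense subspace $\cS(\rd)$ to all of $\lrd$. The natural approach is a density/continuity argument: both sides of \eqref{rep2} are to be viewed as defining the quadratic form of a bounded positive operator on $\lrd$, and an identity of bounded quadratic forms that holds on a dense subspace extends to the whole space provided one has the requisite continuity. I anticipate this to be the main obstacle, because the map $f\mapsto\int_H|\la f,\mu_e(h)\phi\ra|^2\,dh$ is a priori only known to be finite on $\cS(\rd)$; to pass to the limit I need to know it is bounded by $C\|f\|_2^2$. But \eqref{rep2} on $\cS(\rd)$ supplies exactly this bound with constant $1$, so the functional extends continuously, and by polarization and density the equality of the two bounded sesquilinear forms propagates to all of $\lrd$. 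I would therefore phrase the final step as: the positive sesquilinear form associated with the left-hand integral agrees with $\la f,f\ra$ on the dense subspace $\cS(\rd)$, hence the two forms coincide everywhere, giving \eqref{rep2} for all $f\in\lrd$ and thus \eqref{rep}. The one point to handle carefully is the interchange of $\int_H$ with the limiting process, which is justified by Fatou's lemma together with the uniform bound just obtained.
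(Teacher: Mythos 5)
Your proposal is correct and follows essentially the same route as the paper: $(ii)\Rightarrow(i)$ is the computation preceding the statement, and $(i)\Rightarrow(ii)$ is obtained by testing \eqref{funt2} on $F=W_f$ for $f\in\cS(\rd)$, applying Moyal's identity and $\la 1,W_f\ra=\|f\|_2^2$. The only difference is that you spell out the density step (Fatou plus the bound with constant $1$ making $f\mapsto\la f,\mu_e(\cdot)\phi\ra$ an isometry into $L^2(H)$), which the paper compresses into the single remark that \eqref{rep2} can be checked on the dense subspace $\cS(\rd)$.
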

\begin{proof}
Observe that the reproducibility condition \eqref{rep2} can be checked on the dense subspace   $\cS(\rd)\subset L^2(\rd)$.\\
$(i)\Rightarrow (ii)$.  Take $f\in\cS(\rd)$ and use the admissibility condition with $F=W_f$. Then,
Moyal's identity and $\la 1, W_f\ra=\| f\|_2^2$ immediately provide the desired result.\\
\noindent
$(ii)\Rightarrow (i)$. It follows by the previous observations.
\end{proof}

\textbf{Remark.} If the assumptions  \eqref{suff1} and \eqref{suff2} of Theorem \ref{main} hold, then the mapping $F\to \la W_{\mu_e(h)\phi}, F\ra$ is in $L^1(H)$ (one can apply Fubini Theorem and exchange the integrals) and, moreover,  $\int_{H}\la W_{\mu_e(h)\phi}, F\,\ra dh=\la \int_{H} W_{\mu_e(h)\phi}\, dh, F\ra=\la 1, F\ra, \quad \forall F\in L^1(\rdd).$ Hence the weak admissibility condition  generalizes  \eqref{admiss}.

\subsection{Gabor analysis} Gabor's reproducing formula is given by
$$f=\int_\rdd \la f, T_x M_\xi\psi \ra T_x M_\xi\psi \,dxd\xi,$$
which is (weakly) true for every $\psi\in\lrd$, with $\|\psi\|_2=1$.
We shall refer to this basic fact as to Gabor's theorem.  The extended metaplectic
representation $\mu_e$ restricted to the subgroup  $H\cong\R^{2d}$ consisting of the first factor in $G$ is, of course, the
Schr\" odinger representation $\rho$.
If we consider a function $\psi\in L^1(\rd)\cap \Fur L^1(\rd)$, so
that conditions \eqref{suff1} and \eqref{suff2} are satisfied, the
admissibility condition \eqref{admiss}  becomes $\int_\rdd
W_\psi(x-q,\xi-p)\,dpdq=1$, for a.e. $\phas\in\rdd,$ that is
\begin{equation}\label{GabWig}
\int_\rdd W_\psi(q,p)\,dpdq=1.
\end{equation}
We need the conditions \eqref{suff1} and \eqref{suff2} to have
$W_\psi\in L^1(\rdd)$ for granted. However, if we drop the
requirement  $W_\psi\in L^1(\rdd)$, the reproducing window $\psi$
can be rougher, as shown below.

\begin{proposition}\label{gb}
Let $\psi\in L^2(\rd)$ with $\|\psi\|_{L^2}=1$. \\
(i) If   $\psi\in
L^1(\rd)$, then $\displaystyle{\intrd\left(\intrd W_\psi(q,p) dq\right)
\,dp=1}$. \\
(ii) If $\hat{\psi}\in L^1(\rd)$, then $\displaystyle{\intrd\left(\intrd W_\psi(q,p) dp\right) \,dq=1}$.
\end{proposition}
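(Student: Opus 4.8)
The plan is to reduce each iterated integral to a one-dimensional marginal of $W_\psi$ by means of the marginal properties \eqref{mmm} in Proposition \ref{Wigflavor}(v), and then to close the computation with Plancherel's identity and the normalization $\|\psi\|_2=1$. The two statements are mirror images of one another under the interchange of the space and frequency variables, so I would treat part (i) in detail and obtain part (ii) by transposition.

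For part (i), I would fix $p\in\rd$ and first integrate in the space variable $q$. Since $\psi\in L^1(\rd)$, the second identity in \eqref{mmm} applies with the correspondence $x=q$, $\xi=p$ and gives
$$\intrd W_\psi(q,p)\,dq=|\hat\psi(p)|^2.$$
Integrating in $p$ and invoking Plancherel's theorem --- legitimate because $\psi\in L^2(\rd)$ forces $\hat\psi\in L^2(\rd)$ with $\|\hat\psi\|_2=\|\psi\|_2$ --- then yields
$$\intrd\left(\intrd W_\psi(q,p)\,dq\right)dp=\intrd|\hat\psi(p)|^2\,dp=\|\hat\psi\|_2^2=\|\psi\|_2^2=1.$$
For part (ii) the roles of $q$ and $p$ are exchanged: integrating first in the frequency variable $p$ and using $\hat\psi\in L^1(\rd)$, the first identity in \eqref{mmm} produces $\intrd W_\psi(q,p)\,dp=|\psi(q)|^2$, whence $\intrd|\psi(q)|^2\,dq=\|\psi\|_2^2=1$ directly from the hypothesis.

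The one point that genuinely deserves attention --- and the reason the proposition is phrased with iterated rather than double integrals --- is that $W_\psi$ need not lie in $L^1(\rdd)$ when only one of $\psi,\hat\psi$ is integrable, since the global identity \eqref{eqnorm} requires both. Consequently Fubini's theorem is unavailable to merge the iterated integral into a genuine double integral, and the prescribed order of integration is essential: in each case the inner integration must be carried out in precisely the variable for which the relevant marginal in \eqref{mmm} is valid. Beyond matching the integrability hypothesis of Proposition \ref{Wigflavor}(v) to the assumption made in each part --- which holds by design --- I anticipate no real obstacle.
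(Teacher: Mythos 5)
Your argument is correct and coincides with the paper's own proof: both apply the second marginal property of \eqref{mmm} (valid for $\psi\in L^1(\rd)$) to evaluate the inner $q$-integral as $|\hat\psi(p)|^2$ and then conclude by Plancherel, with part (ii) handled symmetrically via the first marginal property. Your closing remark on the necessity of the iterated (rather than double) integral matches the observation the paper itself makes immediately after the proposition.
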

\begin{proof}
If $\psi\in L^1(\rd)$ with $\|\psi\|_{2}=1$, then $\psi$ is
reproducing by the Gabor's theorem. By the
second marginal property in \eqref{mmm}, the map $q\rightarrow W_\psi(q,p)$
is integrable on $\rd$ for a.e. $p\in\rd$. Since
$|\hat\psi(p)|^2=\intrd W_\psi(q,p)\,dq$ and $\intrd
|\hat\psi(p)|^2\,dp=\|\hat\psi\|_2^2=\|\psi\|_2^2=1,$ the claim is
proved. If  $\hat{\psi}\in L^1(\rd)$,  we use the same arguments
with the first marginal property in \eqref{mmm}.
\end{proof}

\textbf{Remark.} The previous proposition indicates that \eqref{GabWig}
 may fail to be true even for a reproducing function $\psi\in\lrd$, but it is replaced by subtly weaker conditions for integrable reproducing functions (or for reproducing functions with integrable Fourier transform). Assumptions \eqref{suff1} and \eqref{suff2} are actually not necessary for
$\psi$ to be a reproducing function, as the simple example  below illustrates.

\begin{ex}\textnormal{
In dimension $d=1$, consider the box function
$\psi(x)=\chi_{[-1/2,1/2]}(x)$. Then $\psi\in L^1(\R)$ and
$\|\psi\|_{L^2}=1$, so that it is a reproducing function.  On the
other hand, conditions  \eqref{suff1} and \eqref{suff2} are not
fulfilled. This is seen  by  computing the Wigner
distribution $W_\psi$. Indeed,  using the definition
\eqref{eq3232}, we get
$$
W_{\psi}(x,\xi)=
\displaystyle\begin{cases}      \displaystyle \frac{\sin[2\pi(1+2 x)\xi]}{\pi \xi},&  x\in (-\frac12,0),\,\xi\not=0 \\
               2(1+2 x),          &   x\in (-\frac12,0),\,\xi=0 \\
              \displaystyle \frac{\sin[2\pi(1-2 x)\xi]}{\pi \xi},&  x\in [0,\frac12),\,\xi\not=0 \\
    2(1-2 x),          &   x\in  [0,\frac12),\,\xi=0 \\
 0, & \mbox{otherwise}.
\end{cases}
$$
Clearly, $W_\psi\notin L^1(\R^2)$, however, observe that
Proposition \ref{gb} is satisfied with the admissibility condition
$\int_{\R}(\int_{\R} W_\psi(x,\xi) d\xi)\,dx=1$.}
\end{ex}

The Gabor case is a particular example of a subgroup of the form
 $H=\rdd\rtimes K$, with $K$ subgroup of $Sp(d,\R)$
(here $K=\{I_{2d}\}$). The reproducibility of $H$ is equivalent to
asking the compactness of $K$. Indeed, if
 arbitrary translations are  allowed in the affine action of
$H$, then the symplectic factor must be compact, as shown below.

\begin{prop}\label{comp} If $H=\rdd\rtimes K$, with
$K\subset\simp$, then  $H$ is reproducing if and only if $K$ is
compact.
\end{prop}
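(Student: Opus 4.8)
The plan is to compute the reproducing integral in \eqref{rep2} directly, exploiting the product structure of both $H$ and its Haar measure, and to reduce everything to the orthogonality relations for the Schr\"odinger representation (the weak form of Gabor's theorem recalled above).

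First I would pin down the Haar measure. Since every $k\in K\subset\simp$ satisfies $\det k=1$, the action of $K$ on $\rdd$ preserves Lebesgue measure, so a left Haar measure of $H=\rdd\rtimes K$ is the product $dh=dq\,dp\,dk$, with $dk$ a left Haar measure of $K$. Writing $h=((q,p),k)$ and recalling $\mu_e(h)=\rho(q,p)\circ\mu(k)$, I would observe that $\mu_e(h)\phi$ equals, up to a unimodular phase, $T_qM_p\phi_k$ with $\phi_k:=\mu(k)\phi$; the phase is irrelevant since only $|\langle f,\mu_e(h)\phi\rangle|$ enters \eqref{rep2}.

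The core step is to evaluate, for $f,\phi\in\lrd$, the integral $\int_H|\langle f,\mu_e(h)\phi\rangle|^2\,dh$ by Tonelli's theorem (the integrand is nonnegative, so the interchange of integrals is unconditional). For each fixed $k$ the inner integral over $\rdd$ is $\int_\rdd|\langle f,T_qM_p\phi_k\rangle|^2\,dq\,dp=\|f\|_2^2\,\|\phi_k\|_2^2$ by the orthogonality relations for $\rho$; since $\mu(k)$ is unitary, $\|\phi_k\|_2=\|\phi\|_2$, so the inner integral equals $\|f\|_2^2\|\phi\|_2^2$, independently of $k$. Integrating over $K$ then yields
$$\int_H|\langle f,\mu_e(h)\phi\rangle|^2\,dh=\|f\|_2^2\,\|\phi\|_2^2\,|K|,\qquad\text{where }|K|:=\int_K\,dk.$$

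From this identity both implications follow at once. If $K$ is compact then, with the paper's normalization $|K|=1$, the choice $\|\phi\|_2=1$ makes the right-hand side equal to $\|f\|_2^2$ for every $f\in\lrd$, so \eqref{rep2} holds and $H$ is reproducing. Conversely, if $K$ is not compact then $|K|=\infty$, whence for any $\phi\neq0$ and any $f\neq0$ the integral diverges, while $\phi=0$ gives $0$; in neither case can \eqref{rep2} hold, so no reproducing function exists. The only genuinely nontrivial ingredient is the measure-theoretic dichotomy underlying the last line --- a locally compact group carries a finite left Haar measure if and only if it is compact --- which is precisely what turns ``$|K|<\infty$'' into ``$K$ compact''. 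Everything else is bookkeeping of the product structure, so I expect the main obstacle to be merely the careful justification of the Haar-measure factorization and of the unconditional use of Tonelli, both of which are clean in this setting.
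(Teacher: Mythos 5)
Your proof is correct and follows essentially the same route as the paper: both factor the integral over $\rdd\times K$, show the inner integral equals $\|f\|_2^2\|\phi\|_2^2$ independently of $k$, and conclude via the fact that a locally compact group has finite Haar measure iff it is compact. The only difference is that you invoke the orthogonality relations for $\rho$ (the isometry of the short-time Fourier transform) directly, whereas the paper rederives that identity inline via Plancherel's theorem for $f\in\sch(\rd)$.
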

\begin{proof}
The left Haar measure of $H$ is given by $dh=dx\,d\xi\,dk$, where
$dx\,d\xi$ is the Lebesgue measure on $\rdd$ and $dk$ is  the left
Haar measure of $K$. In the computations below, we take
$f\in\sch(\rd)$. We first write the right-hand side of
\eqref{rep2}, we then  apply Plancherel's theorem,  compute the
Fourier transform of the time-shift $T_x$, then use Parseval
Identity and, finally, the Fourier transform of the
frequency-shift $M_\xi$:
\begin{align*}
\int_K\int_\rdd|\la f,T_xM_\xi\mu(k)\phi\ra|^2 &\,dxd\xi dk
= \int_K\int_\rdd|\la {\hat f},\cF(T_xM_\xi\mu(k)\phi)\ra|^2\,dxd\xi dk\\
&= \int_K\int_\rdd\left|\int_{\rd}  {\hat f}(t)e^{2\pi i xt}
\;\overline{\cF(M_\xi\mu(k)\phi)}(t)dt\right|^2\,dxd\xi dk\\
&=\int_K\int_{\rd}\int_{\rd}\left|\cF^{-1}({\hat f}
\;\overline{\cF(M_\xi \mu(k)\phi}))(x)\right|^2 \,dxd\xi dk\\
&= \int_K\int_{\rd}\int_{\rd} |{\hat{f}}(\eta)|^2\,|
\cF(M_\xi \mu(k)\phi)(\eta)|^2 \,d\eta d\xi dk\\
&=\int_K\left(\int_{\rd}\left(\int_{\rd}
|\cF (\mu(k)\phi)(\eta-\xi)|^2 \,d\xi\right) |{\hat f}(\eta)|^2 d\eta\right) dk\\
&=\left(\int_{\rd} |{\hat
f}(\eta)|^2d\eta\right)\int_K\!\!\|\cF(\mu(k)\phi)\|^2_2 dk= \|
f\|_2^2 \int_K\!\!\|\mu(k)\phi\|^2_2 dk \\&=\| f\|_2^2\|\phi\|^2_2
\int_K dk=\| f\|_2^2\|\phi\|^2_2 mis(K).
\end{align*}
The  interchange in the order of  integration  is justified by
Fubini's theorem, since
$$ \int_{\rd}\int_{\rd} |{\hat {f}}(\eta)|^2\,|{\Fur({M_\xi
\mu(k)\phi}})(\eta)|^2 \,d\xi d\eta \leq\|\phi\|_2^2 \|f\|^2_2.
$$
Now, the Haar measure of the locally compact subgroup $K$ is
finite if and only if  $K$ is compact (see e.g., \cite{Fol2}).
This  concludes the proof.
\end{proof}

\subsection{Wavelet analysis.} We now examine the (generalized) continuous wavelet transform in higher dimensions, that is in $\rd$, with $d\geq 1$. It arises from restriction of the metaplectic representation to semidirect products of the form $\rd\rtimes D$, where $D$ is any closed subgroup of
$GL(d,\R)$. The product law is
\begin{equation}\label{Fpr} (q,a)\cdot (q',a')=( a q'+ q, a a'),\quad  q,q'\in\rd,\,\, a,a'\in D.
\end{equation}
When $D=GL(d,\R)$ it is the so-called \emph{the Affine Group of Motions on $\rd$}
\cite{LWWW02}, and corresponds to the action $t\mapsto at+q$.
The  wavelet representation of $\rd\rtimes D$ on $\lrd$ associated with this action is given by
\begin{equation}\label{Wrep} (\nu(q,a)\psi)(t)=|\det a|^{-1/2}\psi(a^{-1}(t-q))=(T_qD_a\psi)(t),\quad t\in\rd.
\end{equation}

\ni
A function $\psi\in\lrd$ is \emph{admissible} if
 the Calder\'on condition is fulfilled:

\begin{equation}\label{hmeas}
\int_D|\hat\psi(\t a \xi)|^2\,da=1, \qquad \text{for a.e.\,
}\xi\in\R^d,
\end{equation}
where  $da$ is a  left  Haar measure on $D$.
The subgroup $D\subset GL(d,\R)$ may be identified
with the  subgroup of $\Spnr$ given by
$$\left\{ \bmatrix a&0\\ 0&\;^t\!a^{-1}\endbmatrix,\quad a\in D\right\},$$
and the metaplectic representation $\mu$ of ${D}$ is
\begin{equation*}
(\mu(a)f)(x)=(\det a)^{-1/2}f(a^{-1}x)=\pm |\det
a|^{-1/2}f(a^{-1}x),\quad f\in L^2(\R^d).
\end{equation*}

The group $\rd\rtimes D$ is isomorphic to the subgroup $H$ of $\rdd\rtimes \Spnr$
given by
\begin{equation}\label{group}
H=\left\{h(q,a)=\left(\bmatrix q\\ 0\endbmatrix,\bmatrix a&0\\ 0&\;^t\!a^{-1}\endbmatrix\right),\,\,q\in\rd, \,\,a\in D \right\}.
\end{equation}
Observe that the group law within $\Spnr$ is $h(q,a)h(q',a')=h(aq'+q,aa')$, in accordance with \eqref{Fpr}.
The extended metaplectic representation restricted to $H$ is
\begin{align}
(\mu_e(h(q,a)f)(t)&=(\rho(q,0)\mu(a)f)(t)=(T_q\mu(a)f)(t)\nonumber\\
&=\pm |\det a|^{-1/2}f(a^{-1}(t-q))=\pm (T_ q D_a f)(t).\label{wavrepest}
\end{align}
Thus, up to a sign, the extended metaplectic representation of $H$ coincides
with the wavelet representation $\nu$, so that they give rise to the same reproducing formula
\begin{equation}\label{wavrep}
f=\int_{D}\intrd\la f, T_q D_a \psi\ra T_q D_a \psi\,\, dh(q,a),
\end{equation}
where $dh$ is a left  Haar measure on $H$. If  $da$ is a left Haar measure of
the group $D$ and $dq$ is the standard Lebesgue measure on $\rd$, a left Haar measure $dh(q,a)$ is given by $dh(q,a)= dq\,|\det a|^{-1}\,da $.

One would expect that also the admissibility conditions related to the two representations coincide.
The next result shows the direct correspondence between them.
\begin{proposition}\label{equiv}
Let $\psi\in L^1(\rd)\cap L^2(\rd)$. Then,
\begin{equation*}
 \int_H W_\psi(h^{-1}\cdot\phas)\,dh=\int_D |\hat{\psi}(^t
 a\xi)|^2\,da.
\end{equation*}
In particular, the wavelet admissibility condition \eqref{hmeas}
and the Wigner one \eqref{admiss} coincide.
\end{proposition}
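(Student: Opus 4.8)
The plan is to collapse the left-hand side to a single integral over $D$ by an explicit change of variables in the translation parameter $q$, after which the second marginal property of the Wigner distribution finishes the proof. The only genuine computation is the affine action of $h^{-1}$ on phase space. Writing $h=h(q,a)$ and using the group law $h(q,a)h(q',a')=h(aq'+q,aa')$ recorded after \eqref{group}, I would first read off the inverse $h(q,a)^{-1}=h(-a^{-1}q,a^{-1})$. Feeding this into the affine action \eqref{affaction3}, with symplectic matrix $\diag(a,\t a^{-1})$ and translation part $(q,0)$ for $h$, a short matrix computation (using $\t(a^{-1})^{-1}=\t a$) gives
\[
h^{-1}\cdot\phas=\big(a^{-1}(x-q),\,\t a\,\xi\big),
\]
so that $W_\psi(h^{-1}\cdot\phas)=W_\psi\big(a^{-1}(x-q),\,\t a\,\xi\big)$.

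Next, inserting the left Haar measure $dh(q,a)=dq\,|\det a|^{-1}\,da$ of $H$, I would write
\[
\int_H W_\psi(h^{-1}\cdot\phas)\,dh=\int_D|\det a|^{-1}\Big(\intrd W_\psi\big(a^{-1}(x-q),\,\t a\,\xi\big)\,dq\Big)\,da,
\]
and substitute $u=a^{-1}(x-q)$ in the inner integral. Since $dq=|\det a|\,du$, the Jacobian $|\det a|$ cancels the Haar density $|\det a|^{-1}$ exactly, leaving
\[
\int_H W_\psi(h^{-1}\cdot\phas)\,dh=\int_D\Big(\intrd W_\psi\big(u,\,\t a\,\xi\big)\,du\Big)\,da.
\]
Because $\psi\in L^1(\rd)$, the second marginal property in \eqref{mmm} applies with frequency variable $\t a\,\xi$, giving $\intrd W_\psi(u,\,\t a\,\xi)\,du=|\hat\psi(\t a\,\xi)|^2$, whence the right-hand side equals $\int_D|\hat\psi(\t a\,\xi)|^2\,da$, which is exactly the asserted identity. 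The coincidence of the Calder\'on condition \eqref{hmeas} with the Wigner condition \eqref{admiss} is then immediate.

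Since the manipulation itself is short, the real content lies in the justifications, and this is where I expect the only mild obstacle. The interchange of the $dq$, $da$ and the inner $dy$ integral defining $W_\psi$ must be licensed by Fubini's theorem; this is legitimate because $\psi\in L^1(\rd)$ forces the kernel $e^{-2\pi i\langle\eta,y\rangle}\psi(u+\tfrac{y}{2})\overline{\psi(u-\tfrac{y}{2})}$ to be absolutely integrable, with $\intrd\intrd|\psi(u+\tfrac{y}{2})\psi(u-\tfrac{y}{2})|\,dy\,du=\|\psi\|_1^2<\infty$ after the change of variables $(u,y)\mapsto(u+\tfrac{y}{2},u-\tfrac{y}{2})$. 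The same bound shows $u\mapsto W_\psi(u,\eta)$ is integrable for every fixed $\eta$, so the marginal formula holds pointwise in $\t a\,\xi$ rather than merely almost everywhere, and no measurability subtlety remains. Thus the whole argument reduces to the phase-space bookkeeping and the Jacobian/Haar-density cancellation highlighted above.
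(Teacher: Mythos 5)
Your proposal is correct and follows essentially the same route as the paper: compute $h(q,a)^{-1}\cdot(x,\xi)=\bigl(a^{-1}(x-q),\,{}^t a\,\xi\bigr)$, change variables $u=a^{-1}(x-q)$ so the Jacobian cancels the Haar density $|\det a|^{-1}$, and apply the second marginal property of $W_\psi$. Your added justification of the Fubini interchange and of the pointwise validity of the marginal identity for $\psi\in L^1(\rd)$ is a slightly more careful version of what the paper states without proof, and is welcome but not a different argument.
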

\begin{proof}
If $\psi\in L^1(\rd)\cap L^2(\rd)$, we have $W_\psi \in \cC(\rdd)$ and  $W_\psi(\cdot,\xi)\in L^1(\rd)$, for every $\xi\in\rd$, and the  Wigner marginal property \eqref{mmm} holds. For
 $h(q,a)^{-1} =h(-a^{-1}q,a^{-1})$ the action of $H$ on the phase space is given by
$$h(q,a)^{-1}\cdot \phas=\bmatrix a^{-1}&0\\0&\t a\endbmatrix\bmatrix x\\ \xi\endbmatrix+
\bmatrix -a^{-1} q\\ 0\endbmatrix= \bmatrix a^{-1}(x-q)\\ \t a
\xi\endbmatrix.
$$
The change of variables
$a^{-1}(x-q)=u$,  $dq=|\det a|\,du$, yields
\begin{align*}
\int_H W_\psi(h^{-1}\cdot\phas)\,dh&=\int_D\intrd W_\psi(a^{-1}(x-q),\t a \xi)\,dq\frac{da}{|\det a|}\\
&=\int_D\intrd  W_\psi(u,\t a \xi)\,du\,da=\int_D |\hat{\psi}(^t
a\xi)|^2\,da,
\end{align*}
that is the claim.
\end{proof}

 Alike  the Gabor case, the assumptions \eqref{suff1} and \eqref{suff2} are not necessary for
a reproducing function. Indeed, it is enough that
$W_\psi(\cdot,\xi)\in L^1(\rd)$, for almost every $\xi\in\rd$.\par

It is worthwhile observing that the wavelet reproducing formula associated to a given subgroup $D$ can be obtained from another subgroup of $G$, namely
\begin{equation}
\tilde{H}=\left\{\tilde{h}(q,a)=\left(\bmatrix 0\\ -q\endbmatrix,\bmatrix ^t\!a^{-1}&0\\ 0&\;a\endbmatrix\right),\,\,q\in\rd, \,\,a\in D \right\}.\label{Giso}
\end{equation}
Indeed, $g=(0,J)\in G$, then $g h(q,a) g^{-1}=\tilde{h}(q,a)$ and $H$  in \eqref{group} and  $\tilde{H}$ are conjugate.
This implies that one is reproducing if and only if the other is, and the corresponding reproducing formulae are equivalent.
Observe that the extended metaplectic representation of $\tilde{H}$  is nothing else but the wavelet representation on the frequency side:
\begin{align}
(\mu_e(\tilde{h}(q,a)f)(t)&=(\rho(0,-q)\mu({}^ta^{-1})f)(t)=(M_{-q}\mu({}^ta^{-1})f)(t)\label{wavrepest2}\\
&=\pm (M_{-q} D_{{}^ta^{-1}} f)(t)=\pm \cF(T_{q}
D_{a}f)(t).\nonumber
\end{align}

\section{The class $\cE$}\label{classeE}
In this section we introduce a class of (lower) triangular subgroups of $\Spnr$ and derive general conditions for reproducing formulae to hold true. From  the
structural point of view, a group $H\in\cE$ is of the form $H= \Sigma \rtimes D$,  where again $D$ is a closed subgroup of $GL(d,\R)$ that acts by
automorphism on $\rd$. Thus, we are given a homomorphism $\theta: D \rightarrow \Aut(\rd)$, that is, a $d$-dimensional real representation of $D$, and we define the semidirect product
\begin{equation}\label{plaw}
h(q,a) h(q',a')=h(\theta(a) q'+q,aa').
\end{equation}
The connection with $Sp(d,\R)$ is as follows. First of all, we shall identify $D$ with
\begin{equation*}\left\{\bmatrix a&0\\ 0&\;^t\!a^{-1}\endbmatrix\,: a\in D\right\}\subset \Spnr.
\end{equation*}
Secondly, we preliminarily observe that
\begin{equation}\label{gen}
 N=\left\{\bmatrix I&0\\ \sigma&\;I
\endbmatrix\,:
\sigma\in \mbox{Sym}(d,\R)\right\}\subset \Spnr
\end{equation}
is an abelian Lie subgroup of $\Spnr$, whereby the matrix product
amounts to $\sigma+\sigma'$, the sum within the vector space of $d$ by $d $ symmetric matrices,
denoted Sym$(d,\R)$.
We then assume that we are given an injective homomorphism of abelian groups
$$j:\rd\rightarrow \mbox{Sym}(d,\R),\quad q\mapsto j(q):=\sigma_q
$$
which establishes a group isomorphism of $\rd$ with the image $\Sigma=j(\rd)\subset$ Sym$(d,\R)$. In other words, we assume that we are given a parametrization of a $d$-dimensional subspace $\Sigma$ of Sym$(d,\R)$. Explicitly:

\begin{equation}\label{matrice2}
\Sigma = \left\{  \bmatrix I&0\\
  \sigma_q&I\endbmatrix\,\,q\in\rd\right\}\subset Sp(d,\R).
\end{equation}

Finally, we consider the products $h=h(q,a)$ defined by
\begin{equation}\label{elem}
h(q,a)= \bmatrix I&0\\  \sigma_q&I\endbmatrix\bmatrix a&0\\
0&\;^t\!a^{-1}\endbmatrix=\bmatrix a&0\\
  \sigma_qa&^t\!a^{-1}\endbmatrix.
\end{equation}
Since
\begin{align*}
h(q,a)h(q',a')&=\bmatrix a& 0\\ \sigma_q a&^t\!a^{-1}\endbmatrix\,\bmatrix a'& 0\\ \sigma_{q'} a'&(^t\!{a'})^{-1}\endbmatrix\\
  &=\bmatrix aa'& 0\\ \sigma_q aa'+^t\!a^{-1}\sigma_{q'}a'&^t\!a^{-1}(^t\!{a'})^{-1}\endbmatrix\\
  &=\bmatrix aa'& 0\\ (\sigma_q +^t\!a^{-1}\sigma_{q'}a^{-1})aa'&^t\!(aa')^{-1}\endbmatrix\\
\end{align*}
and
$$
h(\theta(a) q'+q,aa')= \bmatrix aa'&0\\
  \sigma_{\theta(a) q'+q}(aa')&^t\!(aa')^{-1}\endbmatrix,
$$
 the semidirect product law \eqref{plaw} holds true if and only if the parametrization $j$ and the representation $\theta$ satisfy
\begin{equation}\label{equmemee}
\sigma_{\theta(a) q}=^t\!a^{-1}\,\sigma_q \,a^{-1}, \quad a\in
D,\,\,\, q\in\rd.
\end{equation}

Formally, a group $\cE$ can thus be described by a triple $(D,j,\theta)$, where $D$ is a closed subgroup of $GL(d,\R)$, $\theta: D \rightarrow \Aut(\rd)$ is a representation, and $j:\rd\rightarrow \mbox{Sym}(d,\R)$, is an injective homomorphism of abelian groups; the data must satisfy the compatibility equation \eqref{equmemee}. We avoid this excess of notation and write directly $H=\Sigma\rtimes D$. If $H\in\cE$ is chosen, and hence $D$, we assume that a  left Haar measure  $da$ on $D$ has been fixed and consequently the left Haar measure on $H$ that will be fixed is
$$dh=dq\frac{da}{
|\det \theta(a)|},$$
 where $dq$ is the Lebesgue measure on $\rd$.
Finally, the metaplectic representation restricted to $H\in\cE$ is given by
 \begin{align}
\mu(h(q,a))f(x)&=\mu\left(\bmatrix I&0\\  \sigma_q&I\endbmatrix\right)\mu\left(\bmatrix a&0\\
0&\;^t\!a^{-1}\endbmatrix\right) f(x) \nonumber\\
  &=\pm e^{\pi i\langle
\sigma_q x,x\rangle}\mu\left(\bmatrix a&0\\
0&\;^t\!a^{-1}\endbmatrix\right) f(x)\nonumber\\
  &=\pm e^{\pi i\langle
\sigma_q x,x\rangle}(\det a)^{-1/2}f(a^{-1}x).\label{metap}
\end{align}

\subsection{Examples}
\begin{enumerate}
    \item Let $d=2$.   The Translation-Dilation-Sheering group (TDS) is the $4$-dimensional triangular
reproducing group introduced and studied in \cite{AEFK1}. It is defined by
\begin{equation}\label{Baby}
TDS=\Bigl\{ \bmatrix tS_{\ell}&0
\\t B_y S_{\ell}&t^{-1} {}\t S_{\ell}^{-1}\endbmatrix
:t>0,\,\ell\in\R,\;y\in\R^2\Bigr\}
\end{equation}
 where if $y=(y_1,y_2)\in\R^2$ and $\ell\in\R,$
\begin{equation*}
B_y=\bmatrix 0&y_1\\ y_1&y_2\endbmatrix,
\quad S_\ell=\bmatrix 1&\ell\\0&1\endbmatrix.
\end{equation*}
It is isomorphic to the semidirect product $\Sigma\rtimes D$, with $\Sigma=\{\sigma_y, y\in\R^2\}$
$D=\{ t S_{\ell},\,\,t>0,\,\ell\in\R\}$.
A simple computation shows that
$$ \theta(tS_{\ell}) y={}\t(tS_{\ell})^{-2}y.$$
 Thus  $TDS\in \cE$. The group $TDS$ is important because it is the group underlying shearlet theory (see e.g. \cite{Dalke, guo-labate}). This terminology stems from the geometric action of $S_\ell$ on $\R^2$, known as shearing transformation.
\item In dimension $d=2$, consider the group $SIM(2)$  given by
 \begin{equation}\label{sim2g}
 \left\{h(t,y,\varphi)=\bmatrix tR_{\varphi}&0
\\t\Sigma_y R_{\varphi}&t^{-1}R_{\varphi}\endbmatrix,\quad t>0,y\in\R^2,\varphi\in[0,2\pi)\right\},
 \end{equation}
where $R_\varphi= \bmatrix
\cos\varphi&\sin\varphi\\-\sin\varphi&\cos\varphi\endbmatrix$ and  $\Sigma_y=\bmatrix y_1&y_2\\y_2&-y_1\endbmatrix$.
 This subgroup of $Sp(2,\R)$ is also reproducing \cite{ALI2000,AEFK1} and its semidirect structure is $\Sigma\rtimes D$, with $\Sigma=\{\sigma_y, y\in\R^2\}$ and
$D=\{ t R_{\varphi},\,\,t>0,\,\varphi\in[0,2\pi)\in\R\}$. In this case, we have
$$\theta(tR_{\varphi}) y={}\t(tR_{\varphi})^{-2}y,\quad t>0, \,\varphi\in[0,2\pi),\,y\in\R^{2}.
$$
The $SIM(2)$ group is is named so because it is isomorphic to the group of similitude transformations of the plane. It is one in the family of groups
$H_{\alpha,\beta}$ introduced and studied in \cite[Section
6]{AEFK1}. They display an analogous semidirect product structure, and they all belong to $\cE$.
\end{enumerate}
\subsection{The mapping $\Phi$.}\label{42}
Much of  the analysis of the metaplectic representation on a group in the class $\cE$ originates  from the properties of a fundamental quadratic mapping of $\rd$, whose basic properties are described in the next proposition.
\begin{prop}\label{propke}
There exists a quadratic mapping   $\Phi:\rd\rightarrow\rd$, that satisfies
\begin{align}
 \la\sigma_q x,x\ra&=-2\la q, \Phi(x)  \ra, \label{diff2}\\
 \Phi(a^{-1}x)&= {}^t\theta(a)\Phi(x),\label{fuori}
\end{align}
for every $ x\in\rd$ and every $ a\in D.$
The mapping  $\theta$ is defined in   \eqref{equmemee}.
\end{prop}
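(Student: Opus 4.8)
The plan is to construct $\Phi$ directly by duality from the defining data and then read off both identities. First I would observe that, since $j\colon q\mapsto\sigma_q$ is a homomorphism of abelian groups onto the subspace $\Sigma$ — hence $\R$-linear, being a continuous additive map between finite-dimensional spaces, which is anyway the meaning of parametrizing a $d$-dimensional subspace — for each fixed $x\in\rd$ the assignment $q\mapsto\la\sigma_q x,x\ra$ is a linear functional on $\rd$. By the Riesz representation of linear functionals on $\rd$ there is a unique vector, which I \emph{define} to be $-2\Phi(x)$, such that $\la\sigma_q x,x\ra=-2\la q,\Phi(x)\ra$ for all $q$. This is precisely \eqref{diff2}, so identity (1) holds by construction and $\Phi$ is well defined.

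To verify that $\Phi$ is quadratic, I would fix the standard basis $e_1,\dots,e_d$ of $\rd$ and compute components: putting $q=e_i$ in \eqref{diff2} gives $\la\Phi(x),e_i\ra=-\tfrac12\la\sigma_{e_i}x,x\ra$. Since each $\sigma_{e_i}$ is a fixed symmetric matrix, every such expression is a homogeneous quadratic form in $x$; thus each component of $\Phi$ is a quadratic form, and $\Phi$ is a quadratic map.

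The substantive step is \eqref{fuori}. Here I would start from the definition evaluated at $a^{-1}x$, namely $-2\la q,\Phi(a^{-1}x)\ra=\la\sigma_q a^{-1}x,a^{-1}x\ra$, and transport the factor $a^{-1}$ across the symmetric pairing: using $\la Mu,v\ra=\la u,\t M v\ra$ together with $\t\sigma_q=\sigma_q$, one gets $\la\sigma_q a^{-1}x,a^{-1}x\ra=\la \t a^{-1}\sigma_q a^{-1}x,x\ra$. At this point the compatibility equation \eqref{equmemee} supplies $\t a^{-1}\sigma_q a^{-1}=\sigma_{\theta(a)q}$, so that $\la\sigma_q a^{-1}x,a^{-1}x\ra=\la\sigma_{\theta(a)q}x,x\ra=-2\la\theta(a)q,\Phi(x)\ra=-2\la q,\t\theta(a)\Phi(x)\ra$, where I have used \eqref{diff2} once more and then transposed $\theta(a)$. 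Comparing with the left-hand side and letting $q$ range over all of $\rd$ yields $\Phi(a^{-1}x)=\t\theta(a)\Phi(x)$, which is \eqref{fuori}.

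I expect the only real difficulty to be bookkeeping rather than analysis: one must apply \eqref{equmemee} in the correct orientation — it is exactly the identity matching $\sigma_{\theta(a)q}$ with the conjugated matrix $\t a^{-1}\sigma_q a^{-1}$ produced by moving $a^{-1}$ through the symmetric pairing — and keep careful track of which transpose attaches to $\theta(a)$ versus to $a$. Once $\Phi$ is defined by duality, every remaining step is elementary linear algebra over $\rd$, with no convergence or integrability concerns.
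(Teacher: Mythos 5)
Your proposal is correct and follows essentially the same route as the paper: the paper also defines $\Phi$ componentwise by $\Phi_j(x)=-\tfrac12\la\sigma^j x,x\ra$ (which is exactly your Riesz representative in the standard basis) and proves \eqref{fuori} by the same chain of equalities, pairing $\sigma_q a^{-1}x$ with $a^{-1}x$, invoking \eqref{equmemee}, and letting $q$ range over $\rd$. The only cosmetic difference is that the paper substitutes $\sigma_q a^{-1}={}^t a\,\sigma_{\theta(a)q}$ before moving the transpose across the pairing, whereas you conjugate first and then apply \eqref{equmemee}; the two computations are identical in substance.
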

\begin{proof} We shall compute explicitly
the mapping $\Phi$ and prove \eqref{diff2} and \eqref{fuori}.
Select a basis $\{e_i\}$ of $\rd$ and put $\sigma^i=j(e_i)$. Thus, if $q=\sum_{i=1}^d q_i e_i$, then $\sigma_q=\sum_{i=1}^d q_i \sigma^i$ and
\begin{equation*}
\langle \sigma_q x,x\rangle =\langle (\sum_{i=1}^d q_i \sigma^i)x,x\rangle=\sum_{i=1}^d q_i\langle \sigma^ix,x\rangle=-2\la q,\Phi(x)\ra,
\end{equation*}
where
\begin{equation}\label{Phi}
   \Phi(x)=(\Phi_1(x),\dots,\Phi_d(x)),\quad\mbox{with}\quad \Phi_j(x)=-\frac12\langle \sigma^j
   x,x\rangle.
\end{equation}
This establishes  \eqref{diff2}. Finally, using  \eqref{equmemee} and \eqref{diff2}, we obtain
\begin{align*}
-2\la q,\Phi(a^{-1}x)\ra &=\langle \sigma_q a^{-1}x,a^{-1}x\rangle= \langle {}^t a \sigma_{\theta(a)q} x,a^{-1}x\rangle\\
&=\langle \sigma_{\theta(a)q} x,a a^{-1}x\rangle
=\langle \sigma_{\theta(a)q} x,x\rangle\\
&=-2\la \theta(a) q,\Phi(x)\ra=-2\la  q,{}^t\theta(a) \Phi(x)\ra,
\end{align*}
 for every $q\in\rd$, hence  equality \eqref{fuori}.
\end{proof}

 Now, we would expect that the reproducing formula coming from the metaplectic representation of $H$ in \eqref{metap} is somehow
  equivalent to the wavelet case \eqref{wavrepest}. This is what we
  are going to exhibit.

 Our approach gives a general criterion for reproducibility which contains those  in \cite{AEFK1,AEFK2}.

 Let $\Phi\,:\, \R^d\rightarrow \R^d$ be a
 quadratic mapping
and assume that $J_{\Phi}$, the
Jacobian of $\Phi$, does not vanish
identically. Set
$$S=\{x\in \R^d\,:\,
J_{\Phi}(x)=0\}.
$$ Notice that, if $y_0\in
\R^d\setminus\Phi(S)$ and $x_0\in \Phi^{-1}(y_0)$, then $x_0\notin
S$. Hence, by the local invertibility Theorem there exists an open
neighborhood $A$  of $x_0$ and an open neighborhood $B$ of $y_0$
such that $\Phi_{|_A}\,:A\rightarrow B$ is a diffeomorphism. The
local invertibility theorem in $\mathcal{C}^d$ also tells us that
in a neighborhood of $x_0$ in $\mathcal{C}^d$ there are no other
solutions of $\Phi(x)=y_0$. Hence, being the solutions of
$\Phi(x)=y_0$ isolated, by Bezout Theorem they are at most $2^d$
(see (\cite[Section 2.3, page 10]{fulton2}, or also \cite{GaLi80}).

To sum up, for any open set
$X\subset\mathbb{R}^n$ contained in
$\Phi(\R^d)\setminus \Phi(S)$ such that
the cardinality of $\Phi^{-1}(y)$ is
locally constant for $y\in X$, the
mapping $\Phi$ induces a surjective
finite-sheeted covering
$$\Phi\,:\,\Phi^{-1}(X)\longrightarrow
X.
$$
(observe that $\Phi^{-1}(X)$ is also open). In other terms, every
$y_0\in X$ has an open neighborhood $B$ such that
$\Phi^{-1}(B)=\cup_{j=1}^{k} A_j $, $k\leq 2^d$, $A_j\subset
\R^d\setminus S$, $A_j$ open and $A_j\cap A_i=\emptyset$ if
$i\not=j$, and $\Phi\,:\, A_j\rightarrow B$ is a diffeomorphism.
%
\vskip0.5truecm
\textbf{Remark} It could be useful to observe that the assumption
that the cardinality of $\Phi^{-1}(y)$ for $y\in X$ is locally
constant (constant if $X$ is connected) is in fact equivalent to
requiring the more easy to check hypothesis that the map $\Phi:
\Phi^{-1}(X)\to X$ is proper, i.e. for every compact $K\subset X$,
$\Phi^{-1}(K)$ is compact.\par Indeed, assume that the cardinality
of $\Phi^{-1}(y)$ for $y\in X$ is locally constant. Then, as we
saw, $\Phi: \Phi^{-1}(X)\to X$ is a finite-sheeted covering, and
therefore it is clear that $\Phi^{-1}(K)$ is a compact subset,
when $K$ is contained in one of the open subset $B$ above. In the
case of a general compact $K\subset X$, let $x_n\in\Phi^{-1}(K)$
be a sequence. Possibly after replacing $x_n$ with a subsequence,
$\Phi(x_n)$ will converge to an element $\overline{y}\in K$, so
that it must belong to a compact subset $K'$ contained in an a
small neighbourhood $B$ of $\overline{y}$. Hence $x_n\in
\Phi^{-1}(K')$, which is compact by what we have just observed, so
that still a sequence of $x_n$ should converge to an element
$\overline{x}$. By continuity, $\Phi(\overline{x})=\overline{y}$,
so that $\overline{x}\in\Phi^{-1}(K)$.\par Viceversa, suppose that
the map $\Phi: \Phi^{-1}(X)\to X$ is proper. Take $\overline{y}\in
X$ and let $\Phi^{-1}(\overline{y})=
\{\overline{x}_1,...,\overline{x}_k\}$. We already know that, for
convenient pairwise disjoint open neighbourhoods $A_j$,
$j=1,...,k$ of $x_j$ and for an open neighbourhood $B$ of
$\overline{y}$, $\Phi:A_j\to B$ is a diffeomorphism, so that every
$y$ sufficiently close to $\overline{y}$ has at least $k$
pre-images, each contained in one of the $A_j$'s. Now suppose, by
contradiction, that there exists a sequence $y_n\to \overline{y}$,
with each $y_n$ having a further pre-image $x_n$; hence
$x_n\not\in\cup_{j=1}^k A_j$. By the hypothesis of properness, one
subsequence of $x_n$ must converge to an element
$\overline{x}\not\in\cup_{j=1}^k A_j$. By continuity we have
$\Phi(\overline{x})=\overline{y}$, so that $\overline{x}\in
\Phi^{-1}(\overline{y})\subset \cup_{j=1}^k A_j$, which is a
contradiction.

With the notation above, we
have the following result.

\begin{prop}\label{Fabio}
Suppose $X\subset\mathbb{R}^d$ is an
open, simply connected set, contained
in $\Phi(\R^d)\setminus \Phi(S)$, such
that the cardinality of $\Phi^{-1}(y)$
is constant for $y\in X$. Then there
exists an integer $k\in
\{1,\dots,2^d\}$ and there exist open,
connected, and pair-wise disjoint sets
$Y_1,\dots,Y_k\subset \R^d\setminus S$
such that $\Phi^{-1}(X)=\cup_{j=1}^k
Y_j$, and
$$\Phi_{|_{Y_j}}\,:\,Y_j\rightarrow X\,\, \mbox{is\, a\, diffeomorphism}.
$$
\end{prop}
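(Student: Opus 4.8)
The plan is to recognize Proposition~\ref{Fabio} as an instance of the classical fact that a covering of a simply connected base is trivial, and then to upgrade the resulting homeomorphisms to diffeomorphisms by exploiting the non-vanishing of $J_{\Phi}$. Most of the groundwork is already in place: by the discussion preceding the statement, the assumption that the cardinality of $\Phi^{-1}(y)$ is constant on $X$ (equivalently, that $\Phi\colon\Phi^{-1}(X)\to X$ is proper) ensures that $\Phi\colon\Phi^{-1}(X)\to X$ is a finite-sheeted covering map, the number of sheets being bounded by $2^d$ via B\'ezout's theorem. Hence the task reduces to converting this covering into the desired decomposition.

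First I would take $Y_1,\dots,Y_m$ to be the connected components of $\Phi^{-1}(X)$. Since $\Phi^{-1}(X)$ is open in $\rd$, it is locally connected, so each $Y_j$ is open; the $Y_j$ are pairwise disjoint by definition and are contained in $\rd\setminus S$, because $\Phi^{-1}(X)\subset\rd\setminus S$. The crux is to check that each restriction $\Phi|_{Y_j}\colon Y_j\to X$ is a surjective covering of $X$. Openness of $\Phi(Y_j)$ follows because covering maps are open; closedness in $X$ follows from a standard argument, namely that if $y$ lies in the closure of $\Phi(Y_j)$ and $U$ is a connected evenly covered neighbourhood of $y$, then one of the connected sheets over $U$ must meet $Y_j$ and therefore lie entirely inside $Y_j$, which forces $y\in\Phi(Y_j)$. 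As $X$ is connected, $\Phi(Y_j)=X$; and the restriction is again a covering, since over each evenly covered $U$ precisely the sheets contained in $Y_j$ map homeomorphically onto $U$.

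Next I would invoke simple connectedness. Each $Y_j$ is connected, while $X$, being open in $\rd$, is locally path connected and, by hypothesis, simply connected; consequently the connected covering $\Phi|_{Y_j}\colon Y_j\to X$ has a single sheet and is therefore a homeomorphism. Since $J_{\Phi}$ does not vanish on $\rd\setminus S\supset Y_j$, the map $\Phi$ is a local diffeomorphism there, and a bijective local diffeomorphism is a diffeomorphism; hence each $\Phi|_{Y_j}\colon Y_j\to X$ is a diffeomorphism. Finally, because every fibre $\Phi^{-1}(y)$ meets each $Y_j$ in exactly one point, the number $k:=m$ of components coincides with the constant fibre cardinality, so $1\le k\le 2^d$ and $\Phi^{-1}(X)=\bigcup_{j=1}^k Y_j$, as claimed.

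I expect the only real obstacle to be the covering-space bookkeeping — verifying that each connected component surjects onto $X$ and that the restriction is still a covering — rather than any serious analysis, since the covering property and the B\'ezout bound have already been established in the preceding paragraphs. A minor point to keep in mind is that the passage from homeomorphism to diffeomorphism genuinely uses $J_{\Phi}\neq 0$ on $\rd\setminus S$, which is exactly what confines the sets $Y_j$ to the complement of $S$.
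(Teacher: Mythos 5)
Your proof is correct and follows essentially the same route as the paper's: both reduce the statement to the triviality of a finite-sheeted covering over the open, simply connected, locally path-connected set $X$, with the sheet number bounded by $2^d$ via B\'ezout. The paper cites the trivialization theorem directly and sets $Y_j=\Psi(X\times\{x_j\})$, whereas you take the $Y_j$ to be the connected components of $\Phi^{-1}(X)$ and check by hand that each is a one-sheeted covering of $X$; since $X$ is connected these are the same sets, and your explicit passage from homeomorphism to diffeomorphism via $J_{\Phi}\neq 0$ on $\R^d\setminus S$ makes precise a point the paper leaves implicit.
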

\begin{proof} Since $\Phi\,:\, \Phi^{-1}(X)\rightarrow X$ is a covering and $X$ is open (hence locally path-connected) and simply connected, it follows (see, e.g., \cite[Corollary 13.8]{Fulton}) that the covering is trivial: i.e., there exists an homeomomorphism  $\Psi: \,X\times\Phi^{-1}(y_0)\rightarrow \Phi^{-1}(X)$, $y_0$ being any fixed
point in $X$. Since the cardinality of
$\Phi^{-1}(y_0)$ is at most $2^d$, we
have
$$\Phi^{-1}(y_0)=\{x_1,\dots,x_k \}\subset\R^d\setminus S,\quad k\leq 2^d,
$$
and the desired result follows by taking $Y_j=\Psi(X\times \{ x_j\})$.
\end{proof}

Observe that, since $\Phi$ is even, if $\Phi$ is a diffeomorphism from  $Y_j$ onto $X$, then it is a diffeomorphism
from $-Y_j$ onto $X$. Hence the number $k$ of $Y_j$ is even.

\textbf{Examples.}
\begin{enumerate}
    \item In dimension $d=2$,   consider the mapping $\Phi$, related to the  $TDS$ group in \eqref{Baby} and defined by (see \cite[(5.15)]{AEFK1})
    $$\Phi(x_1, x_2)=\left(-x_1x_2,-\frac{x_2^2}2 \right).$$
Here $S=\{(x_1,0), x_1\in\R\}$,
$\Phi(S)=(0,0)$ and
$\Phi(\R^2)=\R\times\R_-\cup\{(0,0)\}$.
If we define $X=\R\times\R_-$, then $X$
is open and simply connected and
$$\Phi^{-1}(X)=Y_1\cup Y_2$$
where
$$Y_1=\{(x_1,x_2)\in\R^2\,:\, x_2>0 \},\quad Y_2=-Y_1=\{(x_1,x_2)\in\R^2\,:\,  x_2<0 \}.
$$
\item In dimension $d=2$, consider the mapping $\Phi$, related to the  $SIM(2)$ group in \eqref{sim2g} and defined by (see \cite[(5.11)]{AEFK1})
$$\Phi(x_1, x_2)=\Big(\frac{x_2^2-x_1^2}2, -x_1x_2\Big).$$
Here $S=\{(0,0)\}$, $\Phi(S)=(0,0)$ and $\Phi(\R^2)=\R^2$.  If we define $X=\R^d\setminus\{(x_1,0),x_1\leq 0\}$, then $X$ is open and simply connected and
$$\Phi^{-1}(X)=Y_1\cup Y_2$$
where
$$Y_1=\{(x_1,x_2)\in\R^2\,:\, x_2>0 \},\quad Y_2=-Y_1=\{(x_1,x_2)\in\R^2\,:\, x_2<0 \}.
$$
\item In dimension $d\geq 2$,   consider the mappings $\Psi_p$, related to the  group $F=\H_e^{d-1}\rtimes U(d-1)\subset\Spnr$, with $\H_e^{d-1}$  the Heisenberg group extended by the usual $1$-dimensional  homogeneous dilations, studied  in \cite{AEFK2} and defined by (see \cite[(20)]{AEFK2})
    $$\Psi_p(x', x_d)=\left(x_dx'-\frac{1}{2}x_d^2\,p, \frac{1}{2}x_d^2\right),\quad (x',x_d)\in\R^{d-1}\times\R,$$
    where $p\in\R^{d-1}$ is fixed.
Here $S=\{(x',0), x'\in\R^{d-1}\}$,
$\Phi(S)=(0,0)$ and
$\Phi(\R^d)=\R^{d-1}\times\R_+\cup\{(0,0)\}$.
If we define $X=\R^{d-1}\times\R_+$,
then $X$ is open and simply connected
and
$$\Phi^{-1}(X)=Y_1\cup Y_2$$
where
$$Y_1=\{(x',x_d)\in\R^{d-1}\times\R\,:\, x_d>0 \},\quad Y_2=-Y_1=\{(x',x_d)\in\R^{d-1}\times\R\,:\,  x_d<0 \}.
$$

\item In dimension $d=2$, consider the mapping $\Phi$, related to
the TDH group, defined in the subsequent \eqref{fil}, given by
(see \eqref{PhiF})
$$
\Phi(x_1,x_2)=(-\frac12(x_1^2+x_2^2),-x_1x_2).
$$
 We have
 $$S=\{(x_1,x_2)\in\R^2\,:\, x_1=\pm x_2\},\quad \Phi(S)=\{(x_1,x_2)\in\R^2\,:\, x_1=\pm x_2, x_1\leq 0\},$$
If we set
\begin{equation}\label{xset}
X=\{(u,v)\in\R_-\times\R\,:\,u^2-v^2>0\}, \end{equation}
then
$$\Phi(\R^2) =X\cup\{(0,0)\}.$$
In this case $k=2^2=4$ and
$$\Phi^{-1}(X)=Y_1\cup Y_2\cup Y_3\cup Y_4,$$
where
\begin{equation}\label{ysets1}Y_1=\{(x_1,x_2)\in\R_-\times\R\,:\,
x_1^2-x_2^2>0 \},\quad Y_2=-Y_1=\{(x_1,x_2)\in\R_+\times\R\,:\,
x_1^2-x_2^2>0 \},
\end{equation}
\begin{equation}\label{ysets2}Y_3=\{(x_1,x_2)\in\R\times\R_-\,:\, x_1^2-x_2^2<0 \},\quad
Y_4=-Y_3=\{(x_1,x_2)\in\R\times\R_+\,:\, x_1^2-x_2^2<0 \}
\end{equation}
(see Figure 1).
\end{enumerate}

  \vspace{1.2cm}
  \small
 \begin{center}
           \includegraphics{Filgroup.1}
            \\
           $ $
           \begin{center}{Figure 1:
           \small The sets $X$ and $Y_j$, $j=1,\dots 4$, for the TDH group.}
           \end{center}
\end{center}
\normalsize
 \vskip0.5truecm

\begin{lemma}\label{locplanch} Let  $\Phi$,  $Y_j, X$ as in Proposition \ref{Fabio}. If $h\in\cC^\infty_0(Y_j)$, then \begin{equation}\label{planchA}
\int_{\R^d}\left|\int_{Y_j} h(x) e^{2\pi i \langle
q,\Phi(x)\rangle}\,dx\right|^2\,dq=\int_{Y_j}|h(x)|^2\frac{dx}{|J_{\Phi}(x)|},\end{equation}
where $J_{\Phi}(x)$ is the Jacobian of $\Phi$ at $x$.
\end{lemma}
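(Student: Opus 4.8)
The plan is to recognize the inner integral in \eqref{planchA} as a genuine Fourier transform after a single change of variables, and then to invoke Plancherel's theorem. The crucial structural fact made available by Proposition \ref{Fabio} is that $\Phi_j := \Phi|_{Y_j}\colon Y_j \to X$ is a diffeomorphism, with $J_\Phi$ nowhere vanishing on $Y_j$ (since $Y_j\subset\rd\setminus S$). This is exactly what is needed to transport the ``frequency variable'' $\Phi(x)$ into a flat Euclidean variable.

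First I would push the amplitude $h$ forward along $\Phi_j$. Writing $\Phi_j^{-1}\colon X\to Y_j$ for the inverse diffeomorphism, I define
\begin{equation*}
g(y)=\frac{h\bigl(\Phi_j^{-1}(y)\bigr)}{\bigl|J_\Phi\bigl(\Phi_j^{-1}(y)\bigr)\bigr|},\qquad y\in X,
\end{equation*}
and extend $g$ by zero outside $X$. Because $h\in\cC_0^\infty(Y_j)$, because $\Phi_j$ is a diffeomorphism, and because $J_\Phi$ is smooth and non-vanishing on the compact set $\mathrm{supp}\,h\subset Y_j$, the function $g$ is smooth with compact support inside $X$; hence $g\in\cC_0^\infty(\rd)\subset\lrd$. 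Performing the substitution $y=\Phi(x)$, $dy=|J_\Phi(x)|\,dx$, in the inner integral then yields
\begin{equation*}
\int_{Y_j} h(x)\,e^{2\pi i\la q,\Phi(x)\ra}\,dx=\intrd g(y)\,e^{2\pi i\la q,y\ra}\,dy=\cF g(-q).
\end{equation*}

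Next I would apply Plancherel's theorem. Since the Fourier transform is unitary on $\lrd$ and $g\in\lrd$, integrating the modulus squared of the previous display over $q\in\rd$ gives
\begin{equation*}
\intrd\bigl|\cF g(-q)\bigr|^2\,dq=\intrd\bigl|\cF g(q)\bigr|^2\,dq=\intrd|g(y)|^2\,dy.
\end{equation*}
Finally I would change variables back via $y=\Phi(x)$ in this last integral; using $dy=|J_\Phi(x)|\,dx$ and the definition of $g$, the factor $|J_\Phi(x)|^{-2}$ from $|g|^2$ combines with one power of $|J_\Phi(x)|$ from the measure to produce
\begin{equation*}
\intrd|g(y)|^2\,dy=\int_{Y_j}\frac{|h(x)|^2}{|J_\Phi(x)|}\,dx,
\end{equation*}
which is precisely the right-hand side of \eqref{planchA}.

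I do not anticipate a serious analytic obstacle: the identity is at heart Plancherel's theorem written in adapted coordinates. The one point requiring care is the justification that $g$ is a bona fide element of $\cC_0^\infty(\rd)$, so that Plancherel applies with no integrability or convergence issues; this is exactly where the hypotheses of Proposition \ref{Fabio} enter, namely the diffeomorphism property of $\Phi_j$, the non-vanishing of $J_\Phi$ on $Y_j$, and the compactness of $\mathrm{supp}\,h$ in $Y_j$. The restriction to $\cC_0^\infty(Y_j)$ (rather than general $L^2$ amplitudes) is what keeps all these manipulations elementary, and the identity will later be extended to the needed class of functions by density.
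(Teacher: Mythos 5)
Your proof is correct and follows essentially the same route as the paper: change variables $y=\Phi(x)$ using the diffeomorphism $\Phi|_{Y_j}:Y_j\to X$ to turn the inner integral into a Fourier transform, apply Plancherel, and change variables back, with the Jacobian factors combining to give $|J_\Phi(x)|^{-1}$. Your explicit remark that $g\in\cC_0^\infty(\rd)$ because $J_\Phi$ is smooth and bounded away from zero on the compact support of $h$ matches the paper's closing observation that $c<|J_\Phi(x)|<C$ on $\mathrm{supp}\,h$.
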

\begin{proof} Recall that $Y_j\subset \R^d\setminus S$, so that $J_{\Phi}(x)\not=0$ on $Y_j$ and $\Phi_j\,:= \Phi_{|_{Y_j}}$ is a diffeomorphism from $Y_j$ onto $X$. This let us  make the change of variables $\Phi(x)=u$
and use Plancherel's formula:
\begin{align*}
\int_{\R^d}\left|\int_{Y_j} h(x) e^{2\pi i \langle
q,\Phi(x)\rangle}\,dx\right|^2\,dq &=\int_{\R^d}\left|\int_{X}
h(\Phi_j^{-1}(u))e^{2\pi i \langle
q,u\rangle}\,|J_{\Phi_j^{-1}}(u)|\,du\right|^2\,dq\\
&=\int_{\R^d}\left|\int_{\R^d}\chi_{X}(u) h(\Phi_j^{-1}(u))e^{2\pi i
\langle
q,u\rangle}\,|J_{\Phi_j^{-1}}(u)|\,du\right|^2\,dq\\
&=\int_{\rd}|\chi_X(u)h(\Phi_j^{-1}(u))J_{\Phi_j^{-1}}(u)|^2\,du,\\
&=\int_{X}|h(\Phi_j^{-1}(u))J_{\Phi_j^{-1}}(u)|^2\,du,\\
&=\int_{Y_j}|h(x)|^2 \frac{dx}{|J_{\Phi}(x)|},
\end{align*}
where in the last raw we have performed the change of variables
$\Phi_j^{-1}(u)=x$. Observe that, since the supp$\, h$ is a
compact set contained in the open set $Y_j$, there exist two
constants $0<c<C$, such that $c<|J_{\Phi}(x)| <C$ on supp$h$ and the last integral is well defined.
\end{proof}


Now we have all the pieces in place to provide the reproducing
condition on the wavelet $\psi$  which guarantees the
reproducibility of the group $H$.

\begin{theorem}\label{RepThm} Let $H= \Sigma\rtimes D\cong\rd\rtimes D$ be as at the beginning of this section
 and let  $X,Y_j$ be as in Proposition \ref{Fabio}. Then, the identity
\begin{equation}\label{repc}\|f\|^2_2 =\int_H \, |\langle f,\mu(h(q,a))\psi\rangle|^2\,dh(q,a)
\end{equation}
holds for every $f\in
\cC^{\infty}_0(Y_j)$ if and only if
$\psi$ satisfies the condition
\begin{equation}\label{co1-0}
\int_D |\psi(a^{-1} x)|^2 \,|\det
\theta(a)a|^{-1}\,da=|J_{\Phi(x)}|,\qquad
a.e.\,\,x\in Y_j.
\end{equation}
Moreover, \eqref{repc} holds for every
$f\in \cC^{\infty}_0(Y_j\cup(-Y_j))$ if
and only if $\psi$ satisfies the
following two conditions:
\begin{equation}\label{co1}
\int_D |\psi(a^{-1} x)|^2 \,|\det
\theta(a)a|^{-1}\,da=\int_D |\psi(-a^{-1} x)|^2 \,|\det
\theta(a)a|^{-1}\,da=|J_{\Phi(x)}|,
\end{equation}
for $a.e.\,\,x\in Y_j$, and
\begin{equation}\label{co2}
\int_D \psi(-a^{-1}x)\overline{\psi(a^{-1} x)} \,|\det
\theta(a)a|^{-1}\,da=0,\quad a.e.\,\,x\in Y_j.
\end{equation}
\end{theorem}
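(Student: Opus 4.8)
The plan is to reduce the square integral in \eqref{repc} to an oscillatory integral over a single sheet $Y_j$, apply Lemma \ref{locplanch}, and then read off the pointwise conditions on $\psi$.

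First I would compute the matrix coefficient. By \eqref{metap} together with the identity $\la\sigma_q x,x\ra=-2\la q,\Phi(x)\ra$ of \eqref{diff2}, we have, for $f\in\cC^\infty_0(Y_j)$,
\[
\la f,\mu(h(q,a))\psi\ra=\pm|\det a|^{-1/2}\int_{Y_j}f(x)\,\overline{\psi(a^{-1}x)}\,e^{2\pi i\la q,\Phi(x)\ra}\,dx,
\]
the sign and any phase being irrelevant after taking $|\cdot|^2$. Writing $h_a(x)=f(x)\overline{\psi(a^{-1}x)}$, which is compactly supported in $Y_j$ and lies in $L^2$, and recalling $dh=dq\,|\det\theta(a)|^{-1}da$, I would integrate first in $q$ by Lemma \ref{locplanch}. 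Since on $\mathrm{supp}\,f$ the Jacobian $|J_\Phi|$ is bounded away from $0$ and $\infty$, the identity \eqref{planchA} extends by continuity from $\cC^\infty_0(Y_j)$ to such $h_a$, giving $\int_{\rd}|\la f,\mu(h(q,a))\psi\ra|^2\,dq=|\det a|^{-1}\int_{Y_j}|f(x)|^2|\psi(a^{-1}x)|^2|J_\Phi(x)|^{-1}\,dx$. Integrating over $D$ and using Tonelli (the integrand is nonnegative) together with $|\det\theta(a)a|=|\det\theta(a)|\,|\det a|$, I obtain
\[
\int_H|\la f,\mu(h(q,a))\psi\ra|^2\,dh=\int_{Y_j}\frac{|f(x)|^2}{|J_\Phi(x)|}\Big(\int_D|\psi(a^{-1}x)|^2\,|\det\theta(a)a|^{-1}\,da\Big)dx.
\]
Comparing with $\|f\|_2^2=\int_{Y_j}|f(x)|^2\,dx$ and using the standard fact that $\int_{Y_j}|f|^2 w\,dx=0$ for all $f\in\cC^\infty_0(Y_j)$ forces $w=0$ a.e., the validity of \eqref{repc} for all such $f$ is equivalent to $|J_\Phi(x)|^{-1}\int_D|\psi(a^{-1}x)|^2|\det\theta(a)a|^{-1}da=1$ a.e.\ on $Y_j$, which is precisely \eqref{co1-0}.

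For the two-sheet case I would use that $\Phi$ is even and that $Y_j$ and $-Y_j$ are disjoint, so for $f\in\cC^\infty_0(Y_j\cup(-Y_j))$ the restrictions $u(x):=f(x)$ and $v(x):=f(-x)$, $x\in Y_j$, are arbitrary and independent elements of $\cC^\infty_0(Y_j)$. Substituting $x\mapsto-x$ on the $-Y_j$ part and using $\Phi(-x)=\Phi(x)$, the matrix coefficient becomes $\pm|\det a|^{-1/2}\int_{Y_j}g_a(x)e^{2\pi i\la q,\Phi(x)\ra}dx$ with $g_a(x)=u(x)\overline{\psi(a^{-1}x)}+v(x)\overline{\psi(-a^{-1}x)}$. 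Applying the extended Lemma \ref{locplanch} to $g_a$, integrating over $D$, and expanding $|g_a|^2$ produces three terms governed by $P(x)=\int_D|\psi(a^{-1}x)|^2|\det\theta(a)a|^{-1}da$, $Q(x)=\int_D|\psi(-a^{-1}x)|^2|\det\theta(a)a|^{-1}da$, and the cross term $R(x)=\int_D\psi(-a^{-1}x)\overline{\psi(a^{-1}x)}|\det\theta(a)a|^{-1}da$; absolute integrability of the cross term is ensured by $|R|\le\sqrt{PQ}$ (Cauchy--Schwarz in $L^2(D,|\det\theta(a)a|^{-1}da)$). Equating with $\|f\|_2^2=\int_{Y_j}(|u|^2+|v|^2)dx$ yields
\[
\int_{Y_j}\frac{|u|^2P+|v|^2Q+2\,\mathrm{Re}(u\bar v R)}{|J_\Phi(x)|}\,dx=\int_{Y_j}(|u|^2+|v|^2)\,dx
\]
for all independent $u,v$. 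Taking $v=0$ forces $P=|J_\Phi|$, taking $u=0$ forces $Q=|J_\Phi|$ (this is \eqref{co1}); with these in force the remaining identity reads $\int_{Y_j}\mathrm{Re}(u\bar vR)|J_\Phi|^{-1}dx=0$ for all $u,v$, and replacing $v$ by $iv$ removes the real part, so $R=0$ a.e., i.e.\ \eqref{co2}. The converse is the same computation read backwards.

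I expect the main obstacle to be the justification of the two analytic interchanges rather than the algebra: first, extending Lemma \ref{locplanch} from $\cC^\infty_0(Y_j)$ to the merely $L^2$, compactly supported functions $h_a$ and $g_a$ (this rests on $|J_\Phi|$ being pinched between positive constants on $\mathrm{supp}\,f$, so that \eqref{planchA} survives an $L^2$-density argument); and second, the Fubini/Tonelli step over $D\times Y_j$—harmless for the nonnegative diagonal terms but requiring the Cauchy--Schwarz bound $|R|\le\sqrt{PQ}$ for the cross term. The concluding passage from the integrated identities to the pointwise a.e.\ conditions is the standard localization and polarization argument, using that $u$ and $v$ may be prescribed independently on the disjoint sheets $Y_j$ and $-Y_j$.
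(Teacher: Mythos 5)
Your proposal is correct and follows essentially the same route as the paper: reduce to an oscillatory integral over a single sheet $Y_j$ using the evenness of $\Phi$, apply Lemma \ref{locplanch} to $f(x)\overline{\psi(a^{-1}x)}+f(-x)\overline{\psi(-a^{-1}x)}$, expand the square, and isolate the three conditions by choosing $f$ supported on $Y_j$, on $-Y_j$, and then polarizing. Your added remarks on extending Lemma \ref{locplanch} by $L^2$-density (since $\psi$ is only square-integrable) and on the Cauchy--Schwarz control of the cross term are sound and in fact supply justifications the paper leaves implicit.
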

\begin{proof}
The left Haar measure on $H$ is given by $$dh(q,a)=dq |\det \theta(a)|^{-1}da$$
 and  the metaplectic representation on $H$
in \eqref{metap}, let us  write, for every $f\in \cC^{\infty}_0(Y_j\cup(-Y_j))$,
\begin{align}\label{xxxx}
\int_{H}\, |\langle f,\mu(h(q,a))\psi\rangle|^2\,dh(q,a)
&=\int_D\int_{\rd}\left|\int_{\R^d} f(x)  e^{-\pi i \langle
\sigma_q x,x\rangle}(\det a)^{-1/2}\overline{\psi(a^{-1}
x)}\,dx\right|^2dq \\
&\quad\quad\quad\quad\quad\cdot|\det \theta(a)|^{-1}\,da\nonumber\\
&=\int_D\int_{\rd}\!\left|\int_{\R^d}\! f(x)  e^{2\pi i \langle
q,\Phi(x)\rangle}\overline{\psi(a^{-1} x)}\,dx\right|^2\!dq |\det
 \theta(a)a|^{-1}\!da\nonumber\\
 &=\int_D\int_{\rd}\!\left|\int_{Y_j} [f(x)\overline{\psi(a^{-1} x)}+f(-x)\overline{\psi(-a^{-1} x)}] \nonumber\right.\\
 &\quad\quad\quad\quad\quad\cdot\left. e^{2\pi i \langle
q,\Phi(x)\rangle}\,dx\right|^2\!dq |\det
 \theta(a)a|^{-1}\!da\nonumber
\end{align}
where the last equality is due to the even property of $\Phi$. \par
Now, we set $h(x):=f(x)\overline{\psi(a^{-1} x)}+f(-x)\overline{\psi(-a^{-1} x)}$ and apply Lemma \ref{locplanch}, so that
\begin{align}
\int_{H}& |\langle f,\mu(h(q,a))\psi\rangle|^2\,dh(q,a)
=\int_D\int_{Y_j}[| f(x) \overline{\psi(a^{-1}x)}|^2+| f(-x) \overline{\psi(-a^{-1}x)}|^2\nonumber\\
&\,\quad \quad\quad\quad\quad\quad + 2{\mathcal Re}\,(f(x)
\overline{\psi(a^{-1}x)} \overline{f(-x)} \psi(-a^{-1}x)]
\frac{dx}{|J_{\Phi(x)}|}|\det  \theta(a) a |^{-1}\!da\nonumber.
\end{align}

Suppose at first that $f$ satisfies the additional property:
$f(x)=0$ on $-Y_j$, then
\begin{equation*}
\int_{H} |\langle f,\mu(h(q,a))\psi\rangle|^2\,dh(q,a)
=\int_{Y_j}|f(x)|^2\left(\int_D|\psi(a^{-1}x)|^2\,|\det
 \theta(a) a |^{-1}\,da\right)\frac{dx}{|J_{\Phi(x)}|}
\end{equation*}
so that $\int_{H} |\langle
f,\mu(h(q,a))\phi\rangle|^2\,dh(q,a)=\|f\|^2_2$
if and only if \eqref{co1-0} holds.

If, instead,  $f(x)=0$ on $Y_j$, the equality  $\int_{H} |\langle
f,\mu(h(q,a))\phi\rangle|^2\,dh(q,a)=\|f\|^2_2$ holds true if and only if the second and the last expression in \eqref{co1} are equal.

Finally, taking  $f\in\cC_0^\infty(Y_j\cup(-Y_j))$,  such that $f(x)\overline{f(-x)}$ is real-valued,
 purely imaginary-valued, respectively, we have
\begin{align*}
\int_{H} &|\langle f,\mu(h(q,a))\phi\rangle|^2\,dh(q,a)=\|f\|_2^2
\end{align*}
if and only if both  conditions \eqref{co1} and \eqref{co2} are fulfilled.
\end{proof}

\begin{cor} Theorem \ref{RepThm} still holds if the
assumptions $f\in \cC^\infty_0(Y_j)$
or $f\in\cC^\infty_0(Y_j\cup (-Y_j))$
are replaced by
 $f\in L^2(Y_j)$, or $f\in L^2(Y_j\cup (-Y_j))$, respectively.
 \end{cor}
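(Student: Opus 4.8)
The plan is to prove the Corollary by a density and continuity argument, exploiting the fact that both sides of the identity \eqref{repc} are continuous quadratic forms in $f$ on the relevant $L^2$ space, and that $\cC^\infty_0(Y_j)$ is dense in $L^2(Y_j)$ (and similarly $\cC^\infty_0(Y_j\cup(-Y_j))$ in $L^2(Y_j\cup(-Y_j))$). First I would observe that Theorem \ref{RepThm} already establishes the equivalence between \eqref{repc} and the wavelet conditions \eqref{co1-0}, \eqref{co1}, \eqref{co2} for the dense subspaces of smooth compactly supported functions. Since the conditions \eqref{co1-0}--\eqref{co2} make no reference to $f$, it suffices to show that if \eqref{repc} holds for all $f$ in the smooth dense subspace, then it automatically holds for all $f$ in the full $L^2$ space; the converse implication is trivial, as $\cC^\infty_0(Y_j)\subset L^2(Y_j)$.

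The key technical step is to verify that the right-hand side of \eqref{repc}, namely $f\mapsto \int_H |\langle f,\mu(h(q,a))\psi\rangle|^2\,dh(q,a)$, defines a bounded quadratic form on $L^2(Y_j)$ (resp.\ $L^2(Y_j\cup(-Y_j))$) once the admissibility condition holds. Here I would reuse the computation in the proof of Theorem \ref{RepThm}: under the supp$\,f\subset Y_j$ restriction the right-hand side reduces, via Lemma \ref{locplanch}, to
\begin{equation*}
\int_{Y_j}|f(x)|^2\left(\int_D|\psi(a^{-1}x)|^2\,|\det\theta(a)a|^{-1}\,da\right)\frac{dx}{|J_{\Phi}(x)|},
\end{equation*}
which under \eqref{co1-0} equals exactly $\int_{Y_j}|f(x)|^2\,dx=\|f\|_2^2$. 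Thus on the smooth subspace the right-hand side coincides identically with $\|f\|_2^2$, and both are manifestly continuous (indeed equal) in the $L^2$-norm. The extension to arbitrary $f\in L^2(Y_j)$ then follows: given such $f$, pick $f_n\in\cC^\infty_0(Y_j)$ with $f_n\to f$ in $L^2$; the identity $\int_H|\langle f_n,\mu(h(q,a))\psi\rangle|^2\,dh=\|f_n\|_2^2$ holds for each $n$, and passing to the limit using the continuity of both sides (the left side being a positive quadratic form bounded by $\|f\|_2^2$, hence $L^2$-continuous by polarization) yields the identity for $f$.

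The main obstacle I anticipate is justifying the passage to the limit on the left-hand side, i.e.\ the $L^2$-continuity of $f\mapsto \int_H|\langle f,\mu(h(q,a))\psi\rangle|^2\,dh$. The cleanest route is to note that this map is a nonnegative Hermitian quadratic form whose associated sesquilinear form is bounded, precisely because the computation above shows it equals $\|f\|_2^2$ on the dense subspace; by polarization and the uniform bound it extends continuously and the extension must agree with $\|f\|_2^2$ everywhere. The case $f\in L^2(Y_j\cup(-Y_j))$ is handled identically, now invoking conditions \eqref{co1} and \eqref{co2} together, where the cross term $\int_D\psi(-a^{-1}x)\overline{\psi(a^{-1}x)}\,|\det\theta(a)a|^{-1}\,da$ is controlled by the same finiteness that makes the diagonal terms integrable. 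Thus the entire argument is a routine density extension, and no new estimates beyond those already internal to the proof of Theorem \ref{RepThm} are required.
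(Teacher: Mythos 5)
Your proposal is correct and follows essentially the same route as the paper, whose entire proof is the one-line observation that $\cC^\infty_0(Y_j)$ and $\cC^\infty_0(Y_j\cup(-Y_j))$ are dense in $L^2(Y_j)$ and $L^2(Y_j\cup(-Y_j))$ respectively. You simply spell out the standard isometric-extension/polarization details that the paper leaves implicit, which is fine and adds nothing problematic.
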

\begin{proof} It follows by the density
of $\cC^\infty_0(Y_j)$ and $\cC^\infty_0(Y_j\cup (-Y_j))$
 in $L^2(Y_j)$ and $L^2(Y_j\cup (-Y_j))$, respectively.
\end{proof}
\vskip0.5truecm

\subsection{The case $H= \rd\rtimes N$ }
We shall exhibit that for subgroups of the kind $H= \rd\rtimes
N$, with  $N$ defined in \eqref{gen},
formula \eqref{rep} ever fails. The product law is given by
$h(q,n)h(q',n')=h(q+nq',nn')$.

First of all, in the semidirect product above, we are dealing with the case  $\rd\cong \{0\}\times\rd\subset\rdd$. The choice  $\rd\cong \rd \times \{0\}$  reduces $\rd\rtimes N$ to $\rd$, since it forces $N$ to be $I$.
In our case, for $p,p'\in\rd$, $n=\bmatrix I& 0\\ c &I\endbmatrix$, with ${}^t c=c\in M(d,\R)$, the action on $\rd$ is
\begin{equation*}
p+np'=\bmatrix0\\p\endbmatrix+\bmatrix I& 0\\ c &I\endbmatrix\,\bmatrix0\\p'\endbmatrix
  =\bmatrix 0\\ p+p'\endbmatrix,
\end{equation*}
and the product law becomes
$$h(p,c)h(p',c')=h(p+p',c+c').
$$
The extended metaplectic representation on $H$ is given by
$$\mu_e(h(p,n))f(t)=\rho(0,p)\mu(n)f(t)=\pm M_pe^{i\pi\la ct,t\ra}f(t).
$$
The right-hand side of  \eqref{rep} has the form
\begin{align*}
\int_H|\la f,\mu_e(h)\f\ra|^2\,dh&=\int_{N}\int_{\rd}\left|\cF\left(e^{i\pi\la c\cdot,\cdot\ra}\bar{\f}f\right)(p)\right|^2dp\,d\mu(c)\\
&=\int_{N}\int_{\rd}\left|e^{i\pi\la ct,t\ra}f(t)\overline{\f(t)}\right|^2\,dt\,d\mu(c)\\
&=\int_{N}d\mu(c)\int_{\rd}|f(t)|^2 |\f(t)|^2\,dt,
\end{align*}
where we used Plancherel's formula, so that the last integral either vanishes or diverges.

\section{New $2$-dimensional reproducing subgroups $H= \Sigma\rtimes D$}

We shall construct two new examples of reproducing subgroups $H= \Sigma\rtimes D$, in dimension $d=2$. To prove their reproducibility, we shall apply the theory developed in the previous section.

\subsection{The TDH group}
Consider the $4$-dimensional group:
\vskip0.1truecm
\begin{equation}\label{fil}
TDH=\Bigl\{h((x,y), (s,t)):= \bmatrix
e^{-s}H(t)&0\\e^sT(x,y)H(t)&e^{s}H(-t)\endbmatrix
:s,t,x,y\in\R\Bigr\}\subset \Sptwor,
\end{equation}
\vskip0.4truecm \noindent with the hyperbolic matrix $H(t)$  given by
\begin{equation}\label{hyp}
H(t)=\bmatrix \cosh t&\sinh t\\\sinh t&\cosh t\endbmatrix,\quad t\in\R
\end{equation}
whereas the symmetric matrix $T(x,y)$ displays the entries
\begin{equation}\label{matT}
T(x,y)=\bmatrix x&y\\y&x\endbmatrix,\quad x,y\in\R.
\end{equation}
The semidirect structure  $H= \R^2\rtimes D$ is clear:
$$\bmatrix
e^{-s}H(t)&0\\e^sT(x,y)H(t)&e^{s}H(-t)\endbmatrix=\bmatrix I
&0\\T(x,y)&I\endbmatrix\bmatrix
e^{-s}H(t)&0\\0&e^{s}H(-t)\endbmatrix.
$$
\begin{proposition}\label{propF} The subgroups $TDH$ of $\Sptwor$  satisfy
the following properties:
\begin{itemize}
\item[(a)] The  product law in $TDH$, is explicitly given by:
$$
h(s,t,z)h(s',t',z')
=h(s+s',t+t',z+e^{2s}H(-2t)z'),\quad z,z'\in\R^2,
t,t',s,s'\in\R.
$$
\item[(b)] The left Haar measure on $TDH$ is $d
h(s,t,z)=e^{-4s}\,ds\,dt\,dz$. \item[(c)] The mapping $\Phi$ in
\eqref{Phi} is explicitly given by
\begin{equation}\label{PhiF}
\Phi(x)=(-\frac12(x^2+y^2),-xy)
\end{equation}
and has  Jacobian $J_{\Phi}(x,y)=-(x^2-y^2)$. Observe that
$\Phi(\R^2)=X\cup\{(0,0)\}$, where the open set $X$ is defined in
\eqref{xset}.
 \item[(d)] The restriction of the metaplectic
representation to $TDH$ is given by:
\begin{equation}\label{mualfabeta}
\mu(h((s,t),(x,y)))f(u)=\pm e^{ s} e^{\pi i \langle
T(x,y) u,u\rangle} \,f(e^{s}H(- t)u),\quad f\in L^2(\R^2)
\end{equation}
\item[(e)] The group homomorphism $\theta$ in \eqref{plaw} is
\begin{equation}\label{thetaF}\theta(e^{-s} H(t))=e^{2s}H(-2t)=(e^{-s}H(t))^{-2}={}^t(e^{-s}H(t))^{-2},
\end{equation}
since the matrix $a=a(s,t)=e^{-s} H(t)$ is symmetric. Hence
$\theta$ is the same homomorphism encountered in the $TDS(2)$ and
$SIM(2)$ group cases.
\end{itemize}
\end{proposition}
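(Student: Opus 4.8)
The statement is a bundle of five explicit computations that all follow by feeding the data of $TDH$ into the general machinery of Section~\ref{classeE}. The plan is to first isolate the one algebraic fact that makes every item transparent: writing $K=\bmatrix 0&1\\1&0\endbmatrix$, one has $K^2=I$, so the hyperbolic matrix in \eqref{hyp} is $H(t)=\cosh t\,I+\sinh t\,K=\exp(tK)$, and the symmetric matrix in \eqref{matT} is $T(x,y)=xI+yK$. Thus $H(t)$, $H(-t)$ and all the $T(x,y)$ lie in the commutative associative algebra $\R[K]\cong\R[\lambda]/(\lambda^2-1)$ of symmetric matrices; in particular every such matrix is symmetric, $H(t)H(t')=H(t+t')$, $H(t)^{-1}=H(-t)$ and $\det H(t)=1$. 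I would record these identities at the outset, since (a)--(e) are all corollaries of them together with the formulas of Section~\ref{classeE}.

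With this in hand I would treat (e) and (a) together. The factor $D=\{a(s,t):=e^{-s}H(t)\}$ is closed under products, $a(s,t)a(s',t')=e^{-(s+s')}H(t+t')=a(s+s',t+t')$, so $(s,t)\mapsto a(s,t)$ is an isomorphism $D\cong(\R^2,+)$. Each $a=a(s,t)$ is symmetric, hence ${}^ta^{-1}=a^{-1}$ and, by commutativity, ${}^ta^{-1}\sigma_q\,a^{-1}=a^{-2}\sigma_q$ stays in $\Sigma=\mathrm{span}\{I,K\}$; identifying $\sigma_{(x,y)}=xI+yK$ with $(x,y)$, left multiplication by $a^{-2}=e^{2s}H(-2t)$ acts on the coefficient vector precisely through the matrix $a^{-2}$. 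This verifies the compatibility \eqref{equmemee} with $\theta(a)=a^{-2}={}^ta^{-2}=e^{2s}H(-2t)$, which is exactly (e) and certifies $TDH\in\cE$. Part (a) is then immediate from the semidirect law \eqref{plaw}: the $D$-component multiplies as $(s+s',t+t')$ and the $\Sigma$-component becomes $z+\theta(a)z'=z+e^{2s}H(-2t)z'$.

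The remaining items are substitutions. For (b), since $D\cong(\R^2,+)$ is abelian it is unimodular with Haar measure $da=ds\,dt$, while $|\det\theta(a)|=|\det(e^{2s}H(-2t))|=e^{4s}$ because $\det H\equiv1$; the prescription $dh=dq\,da/|\det\theta(a)|$ then gives $e^{-4s}\,ds\,dt\,dz$. For (c), taking the standard basis one has $\sigma^1=T(1,0)=I$ and $\sigma^2=T(0,1)=K$, so \eqref{Phi} yields $\Phi_1(x,y)=-\tfrac12\la x,x\ra=-\tfrac12(x^2+y^2)$ and $\Phi_2(x,y)=-\tfrac12\la Kx,x\ra=-xy$, that is \eqref{PhiF}; a direct $2\times2$ determinant gives $|J_\Phi|=|x^2-y^2|$, the quantity that enters Theorem~\ref{RepThm}, and the description of $\Phi(\R^2)$, of $S$ and of the open set $X$ in \eqref{xset} is the one already recorded in Example~(4) of Subsection~\ref{42}. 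For (d), inserting $a^{-1}=e^sH(-t)$ and $(\det a)^{-1/2}=(e^{-2s})^{-1/2}=e^{s}$ into the general formula \eqref{metap} produces \eqref{mualfabeta}.

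I do not expect a genuine obstacle here; the content is bookkeeping against Section~\ref{classeE}. The only step that is not purely mechanical is the verification of \eqref{equmemee}, namely that conjugation by $a^{-1}$ preserves the two-dimensional space $\Sigma$ and realizes $\theta$ as the linear action $a^{-2}$ on $\R^2$. This rests entirely on the commutativity $H(t)T(x,y)=T(x,y)H(t)$ and on $K^2=I$; were $\Sigma$ not a subalgebra, the conjugate ${}^ta^{-1}\sigma_q a^{-1}$ would in general leave $\Sigma$ and $\theta$ would fail to be linear in $(x,y)$. Once this is pinned down, the identification of $\theta$ with the homomorphism already encountered in the $TDS$ and $SIM(2)$ cases, and hence all of (a)--(e), follows.
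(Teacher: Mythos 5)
Your proof is correct and complete; the paper itself omits the argument entirely (``the proof consists of easy calculations, we leave it to the interested reader''), and your organization via the commutative algebra $\R[K]$ with $K^2=I$, which simultaneously yields $H(t)=\exp(tK)$, the symmetry and commutativity needed for \eqref{equmemee}, and the identification $\theta(a)=a^{-2}$, is exactly the right way to make items (a)--(e) transparent. One minor point worth flagging: the direct determinant gives $J_{\Phi}(x,y)=x^2-y^2$ rather than the $-(x^2-y^2)$ stated in the proposition --- a harmless sign slip in the paper, since only $|J_{\Phi}|$ enters Theorem \ref{RepThm}, and your $|J_{\Phi}|=|x^2-y^2|$ is the quantity actually used.
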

Since the proof consists of easy calculations, we leave it to the interested reader.

The reproducibility of the  group $TDH$ is then a mere application
of Theorem \ref{RepThm}, with $\Phi^{-1}(X)=Y_1\cup Y_2\cup
Y_3\cup Y_4,$ and $X$ defined in \eqref{xset}, $Y_1,Y_2=-Y_1$
defined in \eqref{ysets1} and $Y_3, Y_4=-Y_3$ defined in
\eqref{ysets2}.
\begin{theorem} The subgroup $TDH$ is reproducing for $L^2(Y_1\cup-(Y_1))$. Moreover,   $\psi\in L^2(Y_1\cup-(Y_1))$ is a reproducing function for $TDH$
if and only if
\begin{equation}\label{condF1}
\int_{Y_1} |\psi(u,v)|^2 \,\frac{du\,dv}{(u^2-v^2)^2}\,=\int_{Y_1} |\psi(-u,-v)|^2 \,\frac{du\,dv}{(u^2-v^2)^2}\,=1,
\end{equation}
and
\begin{equation}\label{condF2}
\int_{Y_1} \psi(-u,-v)\overline{\psi(u,v)} \,\frac{du\,dv}{(u^2-v^2)^2}\,= 0.
\end{equation}
Similarly, $TDH$ is  reproducing for $L^2(Y_3\cup-(Y_3))$,  and
$\psi\in L^2(Y_3\cup-(Y_3))$ is a reproducing function if fulfills
\eqref{condF1} and \eqref{condF1} with $Y_3$ in place of $Y_1$.
\end{theorem}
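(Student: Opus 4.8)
The plan is to apply Theorem~\ref{RepThm} directly, using the explicit data for the $TDH$ group furnished by Proposition~\ref{propF}. By part~(c) of that proposition we have $\Phi(x,y)=(-\frac12(x^2+y^2),-xy)$ with Jacobian $J_{\Phi}(x,y)=-(x^2-y^2)$, so on the set $Y_1$ (and on $-Y_1$) the quantity $|J_{\Phi}|$ equals $x^2-y^2$ up to sign. The first thing I would do is recall from part~(e) that $\theta(a)=a^{-2}$ with $a=a(s,t)=e^{-s}H(t)$ symmetric, so $|\det\theta(a)\,a|^{-1}=|\det a|^{-1}|\det a^{-2}|^{-1}=|\det a|$. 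Here $\det a=\det(e^{-s}H(t))=e^{-2s}(\cosh^2 t-\sinh^2 t)=e^{-2s}$, so the weight $|\det\theta(a)a|^{-1}\,da$ collapses to a concrete measure on $D$.

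Next I would substitute these ingredients into the abstract reproducing conditions \eqref{co1} and \eqref{co2} of Theorem~\ref{RepThm}, with $Y_j=Y_1$. The membership $f\in\cC_0^\infty(Y_1\cup(-Y_1))$, combined with the density corollary following the theorem, lets me pass to $f\in L^2(Y_1\cup(-Y_1))$ and conclude that $TDH$ is reproducing for $L^2(Y_1\cup(-Y_1))$ precisely when \eqref{co1} and \eqref{co2} hold. The remaining work is to show that, after the change of variables $x=a^{-1}u$ turning the integrals $\int_D|\psi(a^{-1}x)|^2|\det\theta(a)a|^{-1}\,da$ over $D$ into integrals over $Y_1$, conditions \eqref{co1} and \eqref{co2} become exactly \eqref{condF1} and \eqref{condF2}. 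Concretely, I expect the orbit map $a\mapsto a^{-1}x$ to parametrize $Y_1$ (respectively $-Y_1$), and a Jacobian computation should convert $|\det\theta(a)a|^{-1}\,da$ together with the factor $|J_{\Phi(x)}|^{-1}$ into the measure $\frac{du\,dv}{(u^2-v^2)^2}$ appearing in the final conditions. The evenness of $\Phi$ ensures the $-Y_1$ integral matches, giving the symmetric pair of equalities in \eqref{condF1}.

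The main obstacle will be verifying that $D$ acts \emph{simply transitively} on $Y_1$ (up to the sign ambiguity handled by the $\pm$ in the metaplectic formula) and carrying out the change-of-variables computation that identifies the pushed-forward measure $|\det a|\,da$ with $\frac{du\,dv}{(u^2-v^2)^2}$ on $Y_1$. This requires writing $a(s,t)=e^{-s}H(t)$ explicitly, computing the image $a^{-1}(x_0,y_0)$ of a fixed base point, and checking both that this sweeps out all of $Y_1$ as $(s,t)$ ranges over $\R^2$ and that the Jacobian of the map $(s,t)\mapsto a^{-1}(x_0,y_0)$ combines correctly with the Haar weight. The hyperbolic structure of $H(t)$ is what makes $Y_1=\{x^2-y^2>0,\ x<0\}$ (rather than a rotationally symmetric region) the natural orbit, so the factor $x^2-y^2$ is, up to the diffeomorphism, the intrinsic invariant governing the admissibility density.

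Finally, the statement for $Y_3$ follows by the identical argument: since $\Phi$ and its Jacobian have the same algebraic form, and $Y_3=\{x^2-y^2<0,\ y<0\}$ is the other connected component on which $D$ acts simply transitively, replacing $Y_1$ by $Y_3$ throughout yields the analogous conditions. I would therefore only need to remark that the computation is verbatim, with $Y_3$ in place of $Y_1$, completing the proof.
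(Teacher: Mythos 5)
Your proposal follows the paper's proof essentially verbatim: both apply Theorem~\ref{RepThm} to the data of Proposition~\ref{propF}, compute $|\det\theta(a)a|^{-1}=e^{-2s}$, and perform the change of variables ${}^t(u,v)=e^{s}H(-t)\,{}^t(x,y)$, under which $e^{2s}(x^2-y^2)=u^2-v^2$ and $ds\,dt=\frac{du\,dv}{u^2-v^2}$, so that \eqref{co1}--\eqref{co2} turn into \eqref{condF1}--\eqref{condF2}. The simple transitivity of $D$ on $Y_1$ that you flag as the main obstacle is precisely the (implicit) justification of that change of variables in the paper, and your bookkeeping of the weights $|\det\theta(a)a|^{-1}$ and $|J_{\Phi}(x)|^{-1}$ producing the density $(u^2-v^2)^{-2}$ is correct.
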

\begin{proof}
We use Theorem \ref{RepThm} and  translate conditions \eqref{co1} and   \eqref{co2} into this context. The automomorphism $\theta$ is computed in  \eqref{thetaF}, whence
$$|\det\theta(a)a|=|\det((e^{-s}H(t))^{-2}e^{-s}H(t))|=|\det(e^{s}H(-t))|^{-1}|=e^{-2 s}.$$
We consider the case $\Phi=\Phi_{|_{Y_1}}$. The first condition in \eqref{co1} reads in this framework as
$$\int_{\R^2} |\psi(e^{s}H(-t){}^t(x,y))|^2 e^{-2s}\,ds\,dt=x^2-y^2\,\,\quad \mbox{a.e.}\,(x,y)\in Y_1.
$$
 Performing the change of variables $e^{s}H(-t){}^t(x,y)={}^t(u,v)$, we get $e^{2s}(x^2-y^2)=u^2-v^2$.  Here
 $ds\,dt=\displaystyle\frac{1}{u^2-v^2}\,du\,dv$. Hence, the previous integral coincides con the first one in
 the left-hand side of \eqref{condF1}. The other cases are analogous.
\end{proof}

\subsection{The TDW group}

The TDW group arises by tensor-product of $1$-dimensional wavelets (see the end of this section) and  is defined as follows.
$$
H=\Bigl\{h((x,y), (s,t))= \bmatrix
e^{s}&0&0&0\\0&e^t&0&0\\e^{s}x&0&e^{-s}&0\\0&e^ty&0&e^{-t}\endbmatrix :s,t,x,y\in\R\Bigr\}\subset \Sptwor.
$$
The TDW group  enjoys the following properties:\\
$(i)$ If we set $a(s,t)=\bmatrix e^s&0\\0&e^t \endbmatrix$, the
product law in $ H$ is explicitly given by:
$$
h(s,t,z)h(s',t',z')
=h(s+s',t+t',z+a(s,t)^{-2}z'),\quad z,z'\in\R^2,
t,t',s,s'\in\R.
$$ Hence the authomorphism $\theta$ is $ \theta(a(s,t))=a(s,t)^{-2}$ (notice that $(a(s,t)$ is symmetric).\\
$(ii)$ The mapping $\Phi$ in \eqref{Phi}
is  given by
$$
\Phi(x)=-\frac12 (x^2,y^2)
$$
and has  Jacobian $J_{\Phi}(x,y)=xy$, so that $J_{\Phi}(x,y)= 0$
on the set $S=\{(x,y)\,:\,x=0\vee y=0\}$ and
$\Phi(S)=\{(x,0),\,x\leq 0\}\cup\{(0,y),\,y\leq0\}$. Moreover,
$\Phi(\R^2)=\{(x,y)\,:\,x\leq0,y\leq0\}$. In this case, we have
$$ X=\R_-\times\R_-,\quad\Phi^{-1}(X)=Y_1\cup(-Y_1)\cup Y_2\cup (-Y_2),$$
where
$$Y_1=\{(x,y)\in \R_+\times\R_+ \},\quad Y_2=\{(x,y)\in \R_-\times\R_+
\}.
$$
$(iii)$ The restriction of the metaplectic representation to
$H$ is given by:
\begin{equation}\label{mualfabetao}
\mu(h((s,t),(x,y)))f(u,v)=\pm e^{- s/2}e^{\pi i x u^2}e^{-
t/2}e^{\pi i y v^2}  \,f(e^{-s}u,e^{-t}v).
\end{equation}

Theorem \ref{RepThm} rephrased  for the TDW group is as follows.
\begin{theorem} The subgroup TDW is reproducing on $L^2(Y_1\cup(-Y_1))$. A function $\psi\in L^2(Y_1\cup(-Y_1))$
is a reproducing function if and only if
\begin{equation}\label{condtensor}
\int_{Y_1} |\psi(u,v)|^2 \,\frac{du\,dv}{u^2v^2}\,=\int_{Y_1}
|\psi(-u,-v)|^2 \,\frac{du\,dv}{u^2v^2}\,=1,\, \int_{Y_1}
\psi(-u,-v)\overline{\psi(u,v)} \,\frac{du\,dv}{u^2v^2}\,= 0,
\end{equation}
and, similarly, TDW is reproducing on  $L^2(Y_2\cup(-Y_2))$.
\end{theorem}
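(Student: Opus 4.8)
The plan is to derive this statement as a direct specialization of Theorem~\ref{RepThm}, exactly as was done for the $TDH$ group. The first step is to record the weight appearing in the admissibility integrals. Since $D=\{a(s,t)=\diag(e^s,e^t):s,t\in\R\}$ is abelian with Haar measure $da=ds\,dt$, and since property $(i)$ gives $\theta(a(s,t))=a(s,t)^{-2}$, one computes
\begin{equation*}
\det\bigl(\theta(a(s,t))\,a(s,t)\bigr)=\det\diag(e^{-s},e^{-t})=e^{-s-t},
\end{equation*}
so that $|\det\theta(a)a|^{-1}=e^{s+t}$.

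Next I would apply Theorem~\ref{RepThm} on the pair $Y_1\cup(-Y_1)$, using the explicit representation \eqref{mualfabetao} together with the Jacobian $J_\Phi(x_1,x_2)=x_1x_2$ from property $(ii)$. Since $a(s,t)^{-1}x=(e^{-s}x_1,e^{-t}x_2)$, the first equality in \eqref{co1} becomes
\begin{equation*}
\int_{\R^2}|\psi(e^{-s}x_1,e^{-t}x_2)|^2\,e^{s+t}\,ds\,dt=|x_1x_2|,\qquad \text{a.e. }(x_1,x_2)\in Y_1,
\end{equation*}
while the second equality in \eqref{co1} and the cross term \eqref{co2} are the same expressions with $\psi(a^{-1}x)$ replaced by $\psi(-a^{-1}x)$. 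The heart of the computation is then the change of variables $u=e^{-s}x_1$, $v=e^{-t}x_2$, under which $e^{s+t}=x_1x_2/(uv)$ and $ds\,dt=du\,dv/(uv)$, and the domain $\R^2$ in $(s,t)$ maps onto $Y_1=\R_+\times\R_+$ in $(u,v)$. The factor $|x_1x_2|$ on the right then cancels against the $x_1x_2$ produced by the substitution, leaving precisely the three normalized identities in \eqref{condtensor}.

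The only point requiring care is the bookkeeping of the several Jacobian factors so that they collapse into the clean weight $du\,dv/(u^2v^2)$; this is routine once the substitution above is in place, and there is no genuine obstacle, since Theorem~\ref{RepThm} already carries all the analytic content (in particular the fact that the disjoint supports in $Y_1$ and $-Y_1$ are exactly what produces the splitting into \eqref{co1} and \eqref{co2}). Finally, the case $Y_2\cup(-Y_2)$ is handled identically, with $Y_2=\R_-\times\R_+$ in place of $Y_1$: the sign of $x_1$ is immaterial because only the quantities $|x_1x_2|$ and $u^2v^2$ enter the resulting conditions.
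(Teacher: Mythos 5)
Your proposal is correct and follows exactly the route the paper intends: the paper states this theorem as a direct rephrasing of Theorem~\ref{RepThm} (with no written proof), and the explicit proof it gives for the analogous $TDH$ case is precisely your computation of $|\det\theta(a)a|^{-1}$ followed by the change of variables $u=e^{-s}x_1$, $v=e^{-t}x_2$. The only cosmetic addition worth making is a sentence confirming that functions satisfying \eqref{condtensor} actually exist (e.g.\ tensor products of one-dimensional wavelets, as the paper notes immediately afterwards), so that the existence claim ``TDW is reproducing'' is covered and not just the characterization.
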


Now, recall the reproducing subgroup $H_1\subset\R^2\rtimes SL(2,\R)$ given by
$$
H_1=\left\{\left(\bmatrix 0\\
0\endbmatrix, \bmatrix 1&0\\b&1\endbmatrix\bmatrix a^{-1/2}&0\\0 &
a^{1/2}\endbmatrix\right),\,a>0,b\in\R\right\}.
$$
A reproducing function $\f_1$ is reproducing for $H_1$ if and only if
$\f_1\in L^2(\R)$ and
\begin{equation*}
\int_0^\infty |\f_1(x)|^2\frac{dx}{x^2}=\int_0^\infty
|\f_1(-x)|^2\frac{dx}{x^2}=\frac12,\quad \int_0^\infty
\f_1(x)\overline{\f_1(-x)}\frac{dx}{x^2}=0. \end{equation*}
\noindent
It is then  clear that every function $\psi(u,v)=4\f_1(u)\f_1(v)$ fulfils \eqref{condtensor}, so that a reproducing function is obtained by a tensor product of two $1$-dimensional wavelets.

\section*{Acknowledgements}
The authors would like to thank Fabio Nicola and Filippo De Mari for their
 helpful comments.


\end{document}